\newcommand{\ben}{\begin{enumerate}}
\newcommand{\een}{\end{enumerate}}
\newcommand{\bqu}{\begin{quote}}
\newcommand{\equ}{\end{quote}}
\newcommand{\beq}{\begin{equation}}
\newcommand{\eeq}{\end{equation}}
\newcommand{\bec}{\begin{center}}
\newcommand{\ece}{\end{center}}
\allowdisplaybreaks \setlength{\textwidth}{6.5in}
\numberwithin{equation}{section}
\newtheorem{theorem}{Theorem}[section]
\newtheorem{lemma}[theorem]{Lemma}
\newtheorem{example}[theorem]{Example} 
\newtheorem{ind}[theorem]{Induction hypothesis}
\newtheorem{assume}[theorem]{Assumption}
\newtheorem{conc}[theorem]{Conclusion}
\theoremstyle{definition}
\newtheorem{defn}[theorem]{Definition}
\theoremstyle{remark}
\newtheorem{rem}[theorem]{Remark}
\DeclareMathOperator{\kein}{KEIN}
\DeclareMathOperator{\EHR}{EHR}
\DeclareMathOperator{\ehr}{EHR_{alt}}
\DeclareMathOperator{\ehrw}{EHR_{weak}}
\title{Quantifier alternation in a class of recursively defined tree properties}
\date{}
\author{Moumanti Podder}
\address{Moumanti Podder, \ Department of Mathematics, \ University of Washington, \ C-524 Padelford Hall, West Stevens Way Northeast, Seattle, WA 98105, United States.}
\email{mpodder3@uw.edu.}
\begin{document}
\bibliographystyle{plainnat}


\begin{abstract}
Alternating quantifier depth is a natural measure of difficulty required to express first order logical sentences. We define a sequence of first order properties on rooted, locally finite trees in a recursive manner, and provide rigorous arguments for finding the alternating quantifier depth of each property in the sequence, using Ehrenfeucht-Fra\"{i}ss\'{e} games.
\end{abstract}

\subjclass[2010]{05C57, 05C05, 05C75, 03C07, 03C13, 68Q19}

\keywords{First order logic, alternating quantifier depth, recursive properties of rooted trees, Ehrenfeucht games}

\thanks{The author was partially supported by the grant NSF DMS-1444084.}

\maketitle

\section{Introduction}
\sloppy We study the alternating quantifier depths of a class of recursively defined first order properties on rooted, locally finite trees. A tree $T$ is a connected graph on either a finite or an infinite vertex set that does not contain any cycle. We in particular shall consider trees as directed graphs, with clear distinction of parent and child between adjacent vertices. A rooted tree has a special vertex called the root, which we denote by $R$. A tree is called locally finite when each vertex of the tree has finite degree. We denote by $V = V(T)$ the vertex set of the tree $T$. For a given rooted tree $T$ and a vertex $v \in V(T)$, we let $T(v)$ denote the subtree of $T$ that consists of $v$ and all its descendants.

We now define the first order (FO) language on rooted trees. This consists of the root $R$ of the tree as a constant symbol, the equality of vertices (denoted $x = y$ which means that the vertices $x$ and $y$ coincide), and the parent-child relation (denoted $\pi(y) = x$ which means that the vertex $x$ is the parent of the vertex $y$). We denote vertices in the tree other than the root by letters such as $x, y, z, \ldots$ usually in lower case. Sentences are finite and comprise Boolean connectives such as $\neg, \wedge, \vee, \implies, \Leftrightarrow$ etc., and existential (denoted $\exists$) and universal (denoted $\forall$) quantification over vertices. We refer the reader to any of \cite{02}, \cite{03} and \cite{04} for detailed references on first order logic. The \emph{quantifier depth}, abbreviated as q.d., of an FO sentence $A$ is the minimum number of nested quantifiers required to express $A$, whereas the \emph{alternating quantifier depth} of $A$, which we abbreviate henceforth as the a.q.d.\ of $A$, is the minimum number, the minimum being taken over all formulas that express $A$, of the maximum number of alternating nested quantifiers required to write each such formula. That is, a.q.d.\ gives us the minimum number of times we have to switch from existential to universal or universal to existential quantification in a nested sequence of quantifiers in order to express the sentence. Trivially, the a.q.d.\ of any FO sentence is always bounded above by its q.d. Purely existential or purely universal FO sentences are defined to have a.q.d.\ $0$.


\begin{example}
Consider the FO sentence: there exists a vertex with precisely one child. This can be written as
\begin{equation}
\exists x \left[\exists y \left[\left[\pi(y) = x\right] \wedge \forall z \left[\pi(z) = x \implies z = y\right]\right]\right].
\end{equation}
The q.d.\ of this sentence is $3$ whereas the a.q.d.\ is $1$, because we have two existential followed by one universal quantifier in the nested sequence.

\end{example}

One of the classically studied family of questions by mathematical logicians comprise the model-theoretic results about the expressive power of logical languages. In general, given two languages $A$ and $B$ such that $A$ is a subset of $B$, the task is to show that the two are not tautologically equal, i.e.\ there exists a sentence which is expressible in $B$ but not in $A$. Quantifier alternation hierarchy is a natural tool for proving such results. We now discuss some literature on the a.q.d.\ of FO, and also touch briefly on a.q.d.\ of monadic second order (MSO) logic. \cite{05} is a survey paper that discusses the usefulness of quantifier alternation hierarchy in membership algorithms -- algorithms that are used to decide whether a given regular language of finite words is definable by a sentence from FO logic or not. They beautifully express the necessity to understand quantifier alternation hierarchy as a measure of the difficulty of defining a language -- a language is considered complicated if many switches need to be made between blocks of existential quantifiers and blocks of universal quantifiers. \cite{06} considers quantifier alternation hierarchy within FO language comprising two variables over finite words with linear order and binary successor predicate. They show that for a given regular language and any non-negative integer $m$, it is decidable whether the language is definable by an FO sentence of a.q.d.\ at most $m$. \cite{07} also considers quantifier alternation hierarchy in FO logic on finite words -- they prove that one can decide membership of a regular language to the levels of FO sentences that have a.q.d.\ $1$ or FO sentences with a.q.d.\ $2$ and beginning with an existential quantifier. \cite{08} provides an effective characterization of tree languages that are definable by FO sentences with a.q.d.\ $1$. \cite{15} proves the strictness of FO quantifier alternation hierarchy over the class of finite labeled graphs  -- for each positive integer $k$, they exhibit a property of finite labeled directed graphs that is expressible as an FO sentence of a.q.d.\ $k+1$ but not as any FO sentence of a.q.d.\ $k$. \cite{16} shows that FO quantifier alternation hierarchy is equivalent to dot-depth alternation hierarchy for FO formulas over word models with a total ordering on the alphabet rather than the successor relation on word positions. \cite{13}, \cite{14} and \cite{17} discuss the strictness of MSO quantifier alternation, i.e.\ sentences in prenex normal form having a prefix of $k+1$ many alternations of set quantifiers can describe strictly more graph properties than those having a prefix of only $k$ many alternations of set quantifiers. 

All the results we have been able to find in the literature pertaining to quantifier alternation hierarchy as an important tool to understand the expressive power of languages, are in the premise of graphs, trees, grids or pictures where each vertex or position is labeled by some element from a finite alphabet. In particular, in case of trees, some form of ordering is considered on the vertices, and each vertex is assigned a label from the pre-fixed, finite alphabet, and the unary predicate that specifies the label of a vertex is considered. We hope that our attempt to investigate a.q.d.'s of FO properties of rooted trees without the involvement of any alphabet will serve to begin a new direction of study with new definitions of FO and MSO languages on rooted, locally finite trees. Studies in this direction may reveal strictness, or lack thereof, of quantifier alternation hierarchy of the FO language, as we define it, on rooted trees. Continuing with this hope, we cite here \cite{12}, where the authors define the FO language on graphs using the binary relations of adjacency and equality of vertices (without the involvement of any alphabet). They show that the minimum number of quantifier alternations that an FO sentence $A$ must have in order to fail to have a $0-1$ law on $G(n, n^{-\alpha})$ for infinitely many values of $\alpha$ is $3$. We also hope that future work in this area would bring to light many more classes of recursively defined properties on rooted trees whose analysis may reveal more information on quantifier alternation hierarchy.


\subsection{Organization of the paper:} Our paper is organized in the following manner. In Section~\ref{sec:description} we describe the class of recursively defined properties on rooted, locally finite trees that we examine. In Section~\ref{sec:Ehrenfeucht} we describe the main tool we use to understand the a.q.d.\ of these properties -- the Ehrenfeucht games, also known as the Ehrenfeucht-Fra\"{i}ss\'{e} games. In Section~\ref{sec:construction} we describe the detailed inductive construction of the trees where the Ehrenfeucht games are played, and finally, in Section~\ref{sec:winning}, we describe, also along an inductive argument, the winning strategy for Duplicator, that gives us the final conclusion.

\section{Description of the problem}\label{sec:description}
We define, for any rooted, locally finite tree $T$ with root $R$, and any $x \in V(T)$, the property $P_{0}(x)$ which states that $x$ has no child, which can be expressed as
\begin{equation}
P_{0}(x) = \forall y \neg\left[\pi(y) = x\right].
\end{equation}
We now define the class of properties $P_{i}(x)$, for $i \in \mathbb{N}$ and any $x \in V(T)$, recursively as follows:
\begin{equation}\label{recur_prop}
P_{i}(x) := \forall \ y \left[\pi(y) = x \implies \neg P_{i-1}(y)\right].
\end{equation}
For example, $P_{1}(x)$ denotes the property that $x$ has no child with no child. In particular, we define the property $\kein_{i} = P_{i}(R)$ for every $i \in \mathbb{N}$. The aim of this paper is to show that the a.q.d.\ of $\kein_{i}$ is $i$ for every $i \in \mathbb{N} \cup \{0\}$. It is immediate to see that $\kein_{0}$ has a.q.d.\ $0$, since it is a purely universal sentence. From our recursive definition \eqref{recur_prop}, we can also see that the a.q.d.\ of $\kein_{i}$ is at least $i$. It remains to be shown that $i$ is indeed the minimum number of alternations of nested quantifiers required to express the sentence $\kein_{i}$ for every $i \in \mathbb{N}$.

We mention here an analogous family of properties defined for graphs. On graphs, the corresponding FO language will consist of vertices denoted by $x, y, z, \ldots$ etc.\, the equality of vertices (again, denoted $x = y$) and the adjacency of vertices (denoted $x \sim y$), the Boolean connectives and the existential and universal quantifiers over vertices. One can define, for any vertex $x$, the property $Q_{0}(x)$ that $x$ has no neighbour, or, in other words, $x$ being an isolated vertex, and set $N_{0} = \exists x [Q_{0}(x)]$. Then one can define
\begin{equation}
Q_{1}(x) = \forall y [y \sim x \implies [\exists z [\neg[z = x] \wedge [z \sim y]]]],
\end{equation}
or, in other words, for every neighbour $y$ of $x$, there exists some neighbour $z$ of $y$ which is distinct from $x$, or, in other words, no neighbour of $x$ is a degree $1$ vertex. We then define $N_{1} = \exists x [Q_{1}(x)]$. Finally, we recursively define
\begin{equation}
Q_{i}(x) = \forall y[y \sim x \implies \neg[Q_{i-1}(y)]],
\end{equation}
and set $N_{i} = \exists x [Q_{i}(x)]$. Notice that although defined in a rather similar fashion to the tree properties $\kein_{s}$, the properties $N_{s}$ are harder to analyze. This is because of the following reason. As we shall see in the subsequent sections, our argument hinges upon the use of Ehrenfeucht games on two trees that we construct recursively on $s$, such that one of them satisfies $\kein_{s}$ and the other satisfies $\neg \kein_{s}$. The attempt to do a similar recursive construction in the graph case fails, because there, the property $\neg N_{s}$ claims something much stronger than $\neg \kein_{s}$ -- it claims that a certain property fails to hold for \emph{every} vertex in the graph. The study of the properties $N_{s}$ for $s \in \mathbb{N}$ and their a.q.d.\ remains of keen interest to us in future research in this area. The comparison of such classes of properties on graphs and rooted trees may reveal significant difference between FO on graphs and FO on rooted trees as we have defined them. 

\section{The Ehrenfeucht game for alternating quantifiers}\label{sec:Ehrenfeucht}
As mentioned above, the main objective of this paper is to show that $\kein_{s}$ has a.q.d.\ precisely equal to $s$ for every $s \in \mathbb{N}$. The rigorous proof of this statement relies on a special version of the well-known Ehrenfeucht-Fra\"{i}ss\'{e} games, which we henceforth refer to as simply the Ehrenfeucht games. For standard definition of these combinatorial games and their connection to mathematical logic, we refer the reader to any one of the references \cite{01} and \cite{03}. For the special version of the game we are about to make use of in this paper, we refer the reader to Definition~2.7, Definition~2.8 and Theorem~2.9 of \cite{10}, and a formal proof of the connection of this special version with a.q.d.\ of FO sentences in \cite{11}. 

We first state here, in the premise of rooted trees, the version of the Ehrenfeucht games described in \cite{10} and \cite{11}, that determines the maximum among the a.q.d.'s of all FO sentences that hold true in both the structures on which the game is being played. We first state here a few general rules and terminology which apply to all three of Definitions~\ref{EHR_first}, \ref{EHR_alt} and \ref{weaker_EHR}. The game is played, for a given number of rounds, on two given rooted trees $T_{1}$ and $T_{2}$, by two players known as Spoiler and Duplicator. Each round of the game consists of two parts: a \emph{move} by Spoiler followed by a move by Duplicator. By a move, we mean the action by any player of choosing a vertex from one of the two trees. In each round, once Spoiler has made his selection of a vertex from one of the trees, Duplicator must make her selection of a vertex from the \emph{other} tree. Thus, in every round, there is precisely one vertex chosen from $T_{1}$ and one from $T_{2}$.

If in the $i$-th round, for some $1 \leq i \leq r-1$ where $r$ is the total number of rounds, Spoiler makes his move on $T_{1}$ and in the $(i+1)$-st round makes his move on $T_{2}$, or vice versa, then we say that a \emph{switch} has happened. 
\begin{defn}\label{EHR_first}[Ehrenfeucht game with given maximum number of alternations]
Given two rooted trees $T_{1}$ with root $R_{1}$ and $T_{2}$ with root $R_{2}$, and two positive integers $r$ and $s$ with $r \geq s$, this game, denoted $\EHR\left[T_{1}, T_{2}, s, r\right]$, consists of $r$ many rounds. Spoiler is allowed to make his move in the first round on \emph{any} of the two trees $T_{1}$ and $T_{2}$, but throughout the game, he is allowed to make at most $s$ many switches. 

Let $x_{i}$ be the vertex selected from $T_{1}$ and $y_{i}$ that from $T_{2}$ in round $i$, for $1 \leq i \leq r$. We set $x_{0} = R_{1}$ and $y_{0} = R_{2}$. Duplicator wins the game if \emph{all} of the following conditions hold: for all $i, j \in [r]$,
\begin{enumerate}[label={(Main \arabic*)},leftmargin=*]
\item \label{main_1} $\pi(x_{j}) = x_{i} \Leftrightarrow \pi(y_{j}) = y_{i}$;
\item \label{main_2} $x_{i} = x_{j} \Leftrightarrow y_{i} = y_{j}$,
\end{enumerate} 
where for any positive integer $n$, we denote by $[n]$ the set $\left\{0,1, \ldots, n\right\}$.
\end{defn}

The importance of this version of the Ehrenfeucht games is illustrated in the following theorem, which is Theorem~2.9 of \cite{10} stated for rooted, locally finite trees.
\begin{theorem}\label{EHR_logic}
For any two rooted trees $T_{1}$ and $T_{2}$, for positive integers $r$ and $s$, Duplicator wins $\EHR\left[T_{1}, T_{2}, s, r\right]$ if and only if for every FO sentence $A$ of q.d.\ at most $r$ and a.q.d.\ at most $s$, we have $T_{1} \models A \Leftrightarrow T_{2} \models A$.
\end{theorem}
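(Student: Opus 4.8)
The statement is the specialization to rooted, locally finite trees of the classical Ehrenfeucht-Fra\"{i}ss\'{e} theorem, refined so that capping the number of switches available to Spoiler matches capping the alternating quantifier depth of sentences; the plan is to run the familiar back-and-forth argument while carrying along one additional counter, recording the remaining alternation budget. First I would generalize so that the induction has room to work, replacing sentences and the initial configuration by \emph{formulas} $\phi(x_{1},\dots,x_{k})$ and arbitrary \emph{positions} $(\bar a,\bar b,\lambda,m,t)$ of the game, where $\bar a=(R_{1}=a_{0},a_{1},\dots,a_{k})$ and $\bar b=(R_{2}=b_{0},b_{1},\dots,b_{k})$ are the vertices chosen so far from $T_{1}$ and $T_{2}$, $m$ is the number of rounds still to be played, $t\leq m$ is the number of switches still available to Spoiler, and $\lambda\in\{1,2,\star\}$ records the tree on which Spoiler moved in the previous round, with $\lambda=\star$ in the initial position, where the first move is free and costs no switch. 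To each number of free variables and each pair $(m,t)$ one attaches leveled classes $\Sigma^{m,t}$ and $\Pi^{m,t}$ of formulas of quantifier depth at most $m$ and at most $t$ quantifier alternations, defined by the evident recursion in the spirit of the $\Sigma_{k}/\Pi_{k}$ hierarchy: at $m=0$ both are the quantifier-free formulas; for $t\geq1$, $\Sigma^{m,t}$ is the closure under conjunction and disjunction of the quantifier-free formulas, of $\Pi^{m,t-1}$, and of formulas obtained by prefixing an existential quantifier to a member of $\Sigma^{m-1,t}\cup\Pi^{m-1,t-1}$, with $\Pi^{m,t}$ defined dually, while $\Sigma^{m,0}$ and $\Pi^{m,0}$ are the purely existential and purely universal formulas of quantifier depth at most $m$. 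The facts I would record about these classes are: since the signature is relational, with only equality, the parent relation and the constant $R$, there are finitely many atomic formulas on $k+1$ variables, hence each $\Sigma^{m,t}$ and $\Pi^{m,t}$ is finite up to logical equivalence; negation interchanges $\Sigma^{m,t}$ and $\Pi^{m,t}$, and prefixing an existential quantifier to a $\Sigma$-formula leaves the alternation level unchanged whereas prefixing it to a $\Pi$-formula raises it by exactly one; and an FO sentence has quantifier depth at most $r$ and alternating quantifier depth at most $s$ if and only if it is logically equivalent to a Boolean combination of members of $\Sigma^{r,s}\cup\Pi^{r,s}$ --- a leading block of like quantifiers contributing nothing to the alternation count, which is exactly why the configuration $\lambda=\star$ leaves the first move free.

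The heart of the argument is a lemma, proved by induction on $m$: Duplicator has a winning strategy from the position $(\bar a,\bar b,\lambda,m,t)$ if and only if --- for $\lambda=1$ --- every member of $\Sigma^{m,t}(x_{1},\dots,x_{k})$ that is true of $\bar a$ in $T_{1}$ is true of $\bar b$ in $T_{2}$; for $\lambda=2$, the same statement with the roles of $T_{1}$ and $T_{2}$ exchanged; and for $\lambda=\star$, $(T_{1},\bar a)$ and $(T_{2},\bar b)$ agree on all of $\Sigma^{m,t}\cup\Pi^{m,t}$. For $m=0$ there are no rounds left, the relevant classes reduce to the quantifier-free formulas, and ``Duplicator wins the empty game'' says precisely that the map $a_{i}\mapsto b_{i}$ preserves equality and the parent relation among the vertices chosen so far, i.e.\ that conditions \ref{main_1} and \ref{main_2} hold; this is the same as agreement on all quantifier-free formulas, and the one-directional conditions attached to $\lambda\in\{1,2\}$ collapse to it because the quantifier-free classes are closed under negation. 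For the inductive step, in the ($\Leftarrow$) direction, suppose the formula condition holds and Spoiler makes a move, say choosing $a_{k+1}\in T_{1}$; whether this is a switch is dictated by $\lambda$, and that determines whether the surviving budget is $(m-1,t)$ with the same ``side'' or $(m-1,t-1)$ with the opposite ``side''. Conjoining the finitely many (up to equivalence) members of the appropriate leveled class true of $(\bar a,a_{k+1})$ in $T_{1}$ and prefixing the appropriate quantifier yields a formula in the class indexed by the current $(m,t)$ which holds of $\bar a$ in $T_{1}$, hence of $\bar b$ in $T_{2}$, and any vertex $b_{k+1}$ that Duplicator plays as a witness lands the new position within the inductive hypothesis; when instead Spoiler forces a switch, the corresponding transfer is extracted from the other side using that negation interchanges $\Sigma$ and $\Pi$. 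In the ($\Rightarrow$) direction, given a Duplicator winning strategy, I would argue by induction on the recursive description of the formula: Boolean cases are immediate, and for $\exists x_{k+1}\,\psi$ true of $\bar a$ in $T_{1}$ a witness becomes a Spoiler move on $T_{1}$, Duplicator's winning response $b_{k+1}$ leaves a position covered by the lemma at the smaller parameter for $\psi$, whence $\psi$, and therefore $\exists x_{k+1}\,\psi$, transfers to $T_{2}$; the universal case is handled dually by making Spoiler play a counterexample on the other tree, which is a switch, so that the alternation budget drops by exactly one --- the budget decreasing precisely when the new quantifier breaks the current block. Applying the lemma to the initial position $((R_{1}),(R_{2}),\star,r,s)$, and using that agreement on $\Sigma^{r,s}\cup\Pi^{r,s}$ is the same as agreement on all Boolean combinations of its members, hence (by the last recorded fact) on all FO sentences of quantifier depth at most $r$ and alternating quantifier depth at most $s$, yields the theorem.

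The one genuinely delicate point --- the part deserving care rather than being declared routine --- is the alternation bookkeeping: one must pin down the leveled classes $\Sigma^{m,t},\Pi^{m,t}$ so that, simultaneously, each is finite up to logical equivalence, the back-and-forth ``conjoin then quantify'' step stays inside the class of the same quantifier rank and the same alternation bound, moving to the other tree spends exactly one unit of the alternation budget while staying on the same tree spends none, and Boolean combinations of $\Sigma^{r,s}\cup\Pi^{r,s}$ capture exactly the sentences of quantifier depth at most $r$ and alternating quantifier depth at most $s$. This is precisely the content of Definitions~2.7 and~2.8 and Theorem~2.9 of \cite{10} together with the formal treatment in \cite{11}, which we are transcribing into the language of rooted trees; once it is in place, the remainder is the standard Fra\"{i}ss\'{e} back-and-forth together with the finiteness of the relevant formula classes over a finite signature.
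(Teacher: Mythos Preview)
The paper does not prove this theorem at all: it is stated immediately after Definition~\ref{EHR_first} as ``Theorem~2.9 of \cite{10} stated for rooted, locally finite trees'' and used as a black box, with \cite{11} cited for a formal proof. So there is no ``paper's own proof'' to compare against; your proposal supplies what the paper deliberately outsources.

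As for the proposal itself, it is the standard back-and-forth proof of the Ehrenfeucht--Fra\"{i}ss\'{e} theorem, augmented with an alternation counter and a side marker $\lambda$, and the sketch is sound. The key ingredients you identify --- defining leveled classes $\Sigma^{m,t}$ and $\Pi^{m,t}$ compatible with the switch/no-switch dichotomy, finiteness up to equivalence over a finite relational signature, and the ``conjoin all true formulas, then quantify'' step --- are exactly the ones needed, and you correctly flag the alternation bookkeeping as the only genuinely delicate part. This is essentially the argument of Pezzoli in \cite{10} and \cite{11}, transcribed to the rooted-tree signature, which is what the paper intends the reader to consult; your proposal is thus more detailed than the paper but not different in spirit from the cited source.
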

Here, for any rooted tree $T$ and any FO sentence $A$, the notation $T \models A$ implies that $A$ holds in $T$. The conclusion of Theorem~\ref{EHR_logic} is that, if Duplicator wins $\EHR\left[T_{1}, T_{2}, s, r\right]$, then for every FO sentence $A$ of q.d.\ at most $r$ and a.q.d.\ at most $s$, either $A$ holds for both $T_{1}$ and $T_{2}$, or it holds for neither.

The version of the Ehrenfeucht games that we use is slightly different from that given in Definition~\ref{EHR_first}. We show in Lemma~\ref{EHR_implies_EHR_alt} that if, on two given trees $T_{1}$ and $T_{2}$, Duplicator wins the Ehrenfeucht game described in Definition~\ref{EHR_alt} with sufficiently large values of the parameters concerned, then she also wins the Ehrenfeucht game described in Definition~\ref{EHR_first}.
\begin{defn}\label{EHR_alt}[Ehrenfeucht game with alternation after every $k$ rounds for fixed $k$]
Given two rooted trees $T_{1}$ with root $R_{1}$ and $T_{2}$ with root $R_{2}$, and two positive integers $k$ and $s$, this game, denoted $\ehr\left[T_{1}, T_{2}, s; k\right]$, consists of $s k$ rounds. The $s k$ many rounds are divided into $s$ many batches of $k$ rounds each. Spoiler, before the very first round, selects any one of $T_{1}$ and $T_{2}$ and in the first $k$ rounds makes his moves on that tree, in the next $k$ rounds he makes his moves on the other tree, and so on, i.e.\ he makes a switch after every $j k$-th round for every $1 \leq j \leq s-1$ (in particular, there can be no switch possible if $s = 1$).  

If $x_{i}$ is the vertex selected from $T_{1}$ and $y_{i}$ that from $T_{2}$ in round $i$, for $i \in [s k]$, setting $x_{0} = R_{1}$ and $y_{0} = R_{2}$, the winning conditions for Duplicator are \ref{main_1} and \ref{main_2}, as described in Definition~\ref{EHR_first}.
\end{defn}

\begin{lemma}\label{EHR_implies_EHR_alt}
For any two positive integers $r$ and $s$, and two given rooted trees $T_{1}$ and $T_{2}$, if Duplicator wins $\ehr\left[T_{1}, T_{2}, s+1; r\right]$, then she also wins $\EHR\left[T_{1}, T_{2}, s, r\right]$.

\end{lemma}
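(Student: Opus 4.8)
The plan is to have Duplicator play $\EHR\left[T_{1}, T_{2}, s, r\right]$ by privately simulating one play of $\ehr\left[T_{1}, T_{2}, s+1; r\right]$ in which she follows a fixed winning strategy $\mathcal{S}$, which exists by hypothesis. First I would record the structural fact that, since Spoiler is allowed at most $s$ switches in $\EHR\left[T_{1}, T_{2}, s, r\right]$, the $r$ rounds of that game break into at most $s+1$ maximal \emph{runs} of consecutive rounds played on a single tree, with consecutive runs lying on different trees and with the switches occurring exactly at the run boundaries; write $\ell_{1}, \ldots, \ell_{m}$ for the run lengths, so $m \leq s+1$ and $\sum_{j} \ell_{j} = r$. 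On the other side, $\ehr\left[T_{1}, T_{2}, s+1; r\right]$ consists of exactly $s+1$ batches of $r$ rounds, consecutive batches lie on different trees, and Spoiler gets to choose the tree of batch $1$. The simulation assigns run $j$ to batch $j$: when Spoiler makes his $t$-th move of run $j$ in the real game (for $1 \leq t \leq \ell_{j}$), Duplicator feeds it as the $t$-th move of batch $j$ to $\mathcal{S}$ and copies $\mathcal{S}$'s reply into the real game. Declaring at the outset of the shadow game that Spoiler opens batch $1$ on the tree of run $1$ — which Definition~\ref{EHR_alt} permits — and using that there are only two trees, an easy induction shows that batch $j$ and run $j$ are played on the same tree for every $j \leq m$; hence the tree from which $\mathcal{S}$ draws its reply is exactly the tree Duplicator is required to play on in the real game.

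Next I would account for the fact that a run need not fill its batch, so the shadow game described so far is not yet a legal play of $\ehr\left[T_{1}, T_{2}, s+1; r\right]$. The fix is padding: whenever a run $j$ ends with $\ell_{j} < r$ (because Spoiler switches, or because the real game is over), Duplicator mentally lets Spoiler complete batch $j$ by choosing, say, the root of that batch's tree in each of the remaining $r - \ell_{j}$ rounds, and she likewise fills every unused batch $j > m$ the same way, in each case recording $\mathcal{S}$'s replies but never playing them in the real game. The outcome is a bona fide complete play of $\ehr\left[T_{1}, T_{2}, s+1; r\right]$ consistent with $\mathcal{S}$; since $\mathcal{S}$ is winning, Duplicator wins it, so conditions \ref{main_1} and \ref{main_2} hold for all indices $i, j \in \{0, 1, \ldots, (s+1)r\}$ of that play.

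Finally I would transfer the conclusion back. Let $\phi$ be the map sending the index $0$ to $0$ and sending the index of each genuine $\EHR$-round to the index of the shadow-round into which it was fed; by construction $\phi$ is injective, $\phi(0) = 0$, and the vertex chosen from $T_{1}$ (respectively $T_{2}$) in $\EHR$-round $t$ equals the vertex chosen from $T_{1}$ (respectively $T_{2}$) in shadow-round $\phi(t)$. Restricting \ref{main_1} and \ref{main_2} for the shadow play to index pairs lying in the image of $\phi$ yields precisely \ref{main_1} and \ref{main_2} for the $\EHR\left[T_{1}, T_{2}, s, r\right]$ play, so Duplicator wins that game, as desired. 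I expect the only real obstacle to be the bookkeeping of the first two paragraphs: checking that the tree alternation forced inside the $\ehr$ game matches, batch by batch, the tree changes forced by Spoiler's switches in the $\EHR$ game, and that this stays coherent when Spoiler uses fewer than $s$ switches, so that the trailing batches are all padding; once that is set up, the remaining verifications are routine.
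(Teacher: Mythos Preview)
Your proposal is correct and follows essentially the same approach as the paper: both arguments embed the $\EHR$ play into a play of $\ehr\left[T_{1},T_{2},s+1;r\right]$ by sending run $j$ to batch $j$ and then restrict the winning conditions to the image of this embedding. Your version is in fact slightly more careful than the paper's on one point: you explicitly pad each unfinished batch (and all unused trailing batches) so that the shadow play is a genuine complete play of $\ehr$ before invoking the winning strategy, whereas the paper leaves the batch slots $i_{j}+1,\ldots,r$ implicit and simply asserts that the winning conditions ``already hold for the bigger set''; your padding cleanly justifies that assertion.
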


\begin{proof}
Suppose Spoiler makes $t$ many switches during the game $\EHR\left[T_{1}, T_{2}, s, r\right]$, where $t \leq s$. Without loss of generality, let us assume that Spoiler starts playing on $T_{1}$. Let the first switch happen after the $i_{1}$-th round, the second switch after the $(i_{1}+i_{2})$-th round, and so on, where $t, i_{1}, \ldots, i_{t}$ are entirely dependent on Spoiler's decision and are unknown to Duplicator a priori. Clearly, for each $1 \leq j \leq t$, we have $i_{j} \leq r$.

Having a winning strategy $W$ for Duplicator for the game $\ehr\left[T_{1}, T_{2}, s+1; r\right]$ means that, whatever sequence of $(s+1) r$ moves Spoiler plays according to the rules of the game, Duplicator has a sequence of $(s+1) r$ responses such that she can maintain all the winning conditions. Without loss of generality, suppose Spoiler starts playing $\ehr\left[T_{1}, T_{2}, s+1; r\right]$ on $T_{1}$, and let $x_{1}, \ldots, x_{r}$ denote his moves in $T_{1}$ in the first $r$ rounds, and let $y_{1}, \ldots, y_{r}$ denote the corresponding responses of Duplicator in $T_{2}$ according to $W$; let $y_{r+1}, \ldots, y_{2r}$ denote the moves made by Spoiler in $T_{2}$ in rounds $r+1, \ldots, 2r$, and let $x_{r+1}, \ldots, x_{2r}$ denote the corresponding responses of Duplicator in $T_{1}$ according to $W$, and so on.

Now, we construct the winning strategy for Duplicator for the game $\EHR\left[T_{1}, T_{2}, s, r\right]$. As mentioned above, let Spoiler start playing on tree $T_{1}$. Let $a_{1}, \ldots, a_{i_{1}}$ denote the moves made by Spoiler in rounds $1, \ldots, i_{1}$ in tree $T_{1}$. Setting $x_{j} = a_{j}$ for all $1 \leq j \leq i_{1}$, Duplicator replies with moves $b_{j} = y_{j}$ for all $1 \leq j \leq i_{1}$ where the $y_{j}$'s, as mentioned above, are chosen according to strategy $W$. Note that Duplicator can do this because $i_{1} \leq r$. Next, for $i_{1}+1 \leq j \leq i_{1} + i_{2}$, Spoiler plays on $T_{2}$ and selects vertices $b_{j}, i_{1}+1 \leq j \leq i_{1}+i_{2}$. Duplicator, setting $y_{r+j} = b_{i_{1}+j}$ for all $1 \leq j \leq i_{2}$, selects vertices $a_{i_{1}+j} = x_{r+j}$, for $1 \leq j \leq i_{2}$, in $T_{1}$ according to winning strategy $W$. Once again, this is possible since $i_{2} \leq r$. She continues to play like this until all $r$ rounds are done.

Clearly, the winning conditions listed in Definition~\ref{EHR_first} now only need to hold for a subset $\left\{(x_{j}, y_{j}), 1 \leq j \leq i_{1}\right\} \cup \left\{(x_{j}, y_{j}), r+1 \leq j \leq r+i_{2}\right\} \cup \cdots \left\{(x_{j}, y_{j}), (t-1)r+1 \leq j \leq (t-1)r + i_{t}\right\} \cup \left\{(x_{j}, y_{j}), tr+1 \leq j \leq tr + \left(r - \sum_{\ell=1}^{t} i_{\ell}\right)\right\}$ of the set $\left\{(x_{j}, y_{j}), 1 \leq j \leq r s\right\}$, and we know that the winning conditions already hold for the bigger set because Duplicator chose her responses according to the winning strategy $W$. This shows that she wins $\EHR\left[T_{1}, T_{2}, s, r\right]$.

\end{proof}

We give here yet another version of the Ehrenfeucht games, which comes in handy in the description of winning strategies for Duplicator in Subsection~\ref{start on T_2}. 
\begin{defn}\label{weaker_EHR}
Given rooted trees $T_{1}$ with root $R_{1}$ and $T_{2}$ with root $R_{2}$, and positive integers $s$, $i_{1}, i_{2}, \ldots i_{s}$, this version of the Ehrenfeucht game, denoted $\ehrw\left[T_{1}, T_{2}; i_{1}, \ldots i_{s}\right]$, consists of $i_{1} + i_{2} \ldots + i_{s}$ rounds. First, Spoiler chooses any of $T_{1}$ and $T_{2}$, and makes his first $i_{1}$ moves on that tree, while Duplicator makes the corresponding $i_{1}$ moves on the other tree. Spoiler makes the first switch after the $i_{1}$-th round, the second switch after the $(i_{1}+i_{2})$-th round, $\ldots$, and finally, the $(s-1)$-st switch after the $(i_{1}+ \cdots + i_{s-1})$-th round. 

As before, if $x_{i}$ is the vertex selected from $T_{1}$ and $y_{i}$ that from from $T_{2}$ in round $i$, for $1 \leq i \leq \sum_{j=1}^{s}i_{j}$, then, setting $x_{0} = R_{1}$ and $y_{0} = R_{2}$, Duplicator wins the game if \ref{main_1} and \ref{main_2} hold.
\end{defn}
Notice that this is weaker than the game described in Definition~\ref{EHR_first} because here the values $i_{1}, \ldots, i_{s}$ are known to both Spoiler and Duplicator before the start of the game. 

\begin{lemma}\label{EHR_alt_implies_EHR_weak}
For any two positive integers $s$ and $k$ and rooted trees $T_{1}$ and $T_{2}$, if Duplicator wins $\ehr[T_{1}, T_{2}, s; k]$, then she also wins $\ehrw[T_{1}, T_{2}; i_{1}, \ldots i_{s}]$ for any $1 \leq i_{1}, \ldots i_{s} \leq k$.
\end{lemma}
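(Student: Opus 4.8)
The plan is to have Duplicator use her winning strategy for $\ehr[T_1, T_2, s; k]$ as a template for playing $\ehrw[T_1, T_2; i_1, \ldots, i_s]$, simply padding out and then ignoring the surplus rounds. Fix integers $1 \le i_1, \ldots, i_s \le k$ and let $W$ be a winning strategy for Duplicator in $\ehr[T_1, T_2, s; k]$. Recall that in $\ehrw[T_1, T_2; i_1, \ldots, i_s]$ the rounds come in $s$ batches, the $j$-th batch consisting of $i_j$ rounds, with Spoiler playing his entire $j$-th batch on one tree and then switching; the same batch-and-switch structure governs $\ehr[T_1, T_2, s; k]$, except that there every batch has exactly $k$ rounds.

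First I would set up the round correspondence: for $1 \le j \le s$ and $1 \le m \le i_j$, identify the $m$-th round of the $j$-th batch of $\ehrw[T_1, T_2; i_1, \ldots, i_s]$ with round $(j-1)k + m$ of $\ehr[T_1, T_2, s; k]$, i.e.\ the $m$-th round of the $j$-th batch there. During the play of $\ehrw$, Duplicator maintains a simulated play of $\ehr$: when, in the $m$-th round of batch $j$, Spoiler selects a vertex from (say) $T_1$, she feeds this to $W$ as Spoiler's move in round $(j-1)k+m$ of the simulated $\ehr$-game and answers in $\ehrw$ with the vertex $W$ prescribes. When the $j$-th batch of $\ehrw$ ends, before Spoiler's next move Duplicator completes the $j$-th batch of the simulated $\ehr$-game by playing, on behalf of Spoiler, $k - i_j$ further ``dummy'' moves --- each chosen as any vertex of whichever of $T_1, T_2$ Spoiler used during that batch, e.g.\ its root --- and recording (then discarding) the responses dictated by $W$.

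The point of the padding is that the simulated sequence is a \emph{legal} play of $\ehr[T_1, T_2, s; k]$: every batch contains exactly $k$ rounds, Spoiler's moves within batch $j$ (real and dummy alike) all lie on a single tree, and the switches occur precisely after rounds $jk$, $1 \le j \le s-1$, as required by Definition~\ref{EHR_alt}. Since $W$ is winning, conditions \ref{main_1} and \ref{main_2} hold for the full family $\{(x_\ell, y_\ell) : \ell \in [sk]\}$ of vertex pairs produced in the simulation. These conditions are preserved under passing to a subfamily indexed by any subset of the rounds; restricting to the rounds $(j-1)k + m$ with $1 \le j \le s$, $1 \le m \le i_j$ --- which under the correspondence are exactly the rounds actually played in $\ehrw[T_1, T_2; i_1, \ldots, i_s]$ --- yields precisely the winning conditions \ref{main_1} and \ref{main_2} for that game. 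Hence Duplicator wins $\ehrw[T_1, T_2; i_1, \ldots, i_s]$.

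The argument is essentially bookkeeping, much simpler than that of Lemma~\ref{EHR_implies_EHR_alt}; the only point that needs care is that the dummy moves be inserted on the \emph{correct} tree, so that the simulated game respects the ``$k$ rounds per batch, then switch'' protocol of $\ehr[T_1, T_2, s; k]$. Once that is arranged, the conclusion follows from the hereditary nature of \ref{main_1} and \ref{main_2} under restriction of the round set.
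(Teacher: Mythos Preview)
Your proof is correct and follows essentially the same approach the paper intends: the paper omits the details, saying only that the argument is ``very similar to that of Lemma~\ref{EHR_implies_EHR_alt}, and is in fact simpler,'' and your padding-then-restricting argument is precisely the natural specialization of that proof. If anything, your explicit insertion of dummy moves to fill out each batch to $k$ rounds is cleaner than the somewhat implicit treatment of the unused rounds in the paper's proof of Lemma~\ref{EHR_implies_EHR_alt}.
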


\begin{proof}
The proof of this lemma is very similar to that of Lemma~\ref{EHR_implies_EHR_alt}, and is in fact simpler, and the details are therefore omitted.
\end{proof}

We now describe the way we put the Ehrenfeucht game described in Definition~\ref{EHR_alt} to use in proving that $\kein_{s}$ has a.q.d.\ $s$ for each $s \in \mathbb{N}$. For any positive integer $k$, we construct two rooted trees $T_{1}$ and $T_{2}$ (these trees will obviously depend on $k$ and $s$), such that $T_{1} \models \kein_{s}$ and $T_{2} \models \neg \kein_{s}$, and Duplicator wins $\ehr\left[T_{1}, T_{2}, s; k\right]$. From this, we can draw the following conclusion: using Lemma~\ref{EHR_implies_EHR_alt}, we know that Duplicator wins $\EHR\left[T_{1}, T_{2}, s-1, k\right]$, and hence, from Theorem~\ref{EHR_logic}, we can tell that for every FO sentence $A$ that has q.d.\ at most $k$ and a.q.d.\ at most $s-1$, either $A$ holds for both $T_{1}$ and $T_{2}$, or $\neg A$ holds for both $T_{1}$ and $T_{2}$. But notice that $\kein_{s}$ holds for $T_{1}$ whereas $\neg \kein_{s}$ holds for $T_{2}$. This clearly means that $\kein_{s}$ either fails to have q.d.\ at most $k$, or it fails to have a.q.d.\ at most $s-1$. Since we are able to construct $T_{1}$ and $T_{2}$ and provide a winning strategy for Duplicator for arbitrary $k$, clearly it is the upper bound on the a.q.d.\ of $\kein_{s}$ that fails. This shows that $\kein_{s}$ must have a.q.d.\ at least $s$. Since we have noted before that its a.q.d.\ is at most $s$, it must be precisely equal to $s$. This completes the proof of our main result.

\section{Construction of the trees}\label{sec:construction}
In this section, we describe the construction of the trees $T_{1}$ and $T_{2}$ depending on the given parameters $s$ and $k$. We take into account an additional parameter $m$, whose role becomes clear from the construction. The constructions are described inductively on $s$, starting with the base case of $s = 1$, and arbitrary $k$, which we now describe.

For any positive integer $m \geq k$, we construct the rooted trees $T_{1}^{(1,k,m)}$ and $T_{2}^{(1,k,m)}$ as follows:
\begin{enumerate}
\item In $T_{1}^{(1,k,m)}$, the root $R_{1}$ has $m+1$ children $u_{1}, \ldots u_{m+1}$, and each of them has $m$ childless children of its own.
\item In $T_{2}^{(1,k,m)}$, the root $R_{2}$ has $m+1$ children $v_{1}, \ldots v_{m+1}$. Each of $v_{1}, \ldots v_{m}$ has $m$ childless children of its own; $v_{m+1}$ has no child.
\end{enumerate}
An illustration is given in Figure~\ref{base_2}.
\begin{figure}
\centering
\includegraphics[width = 0.8\textwidth]{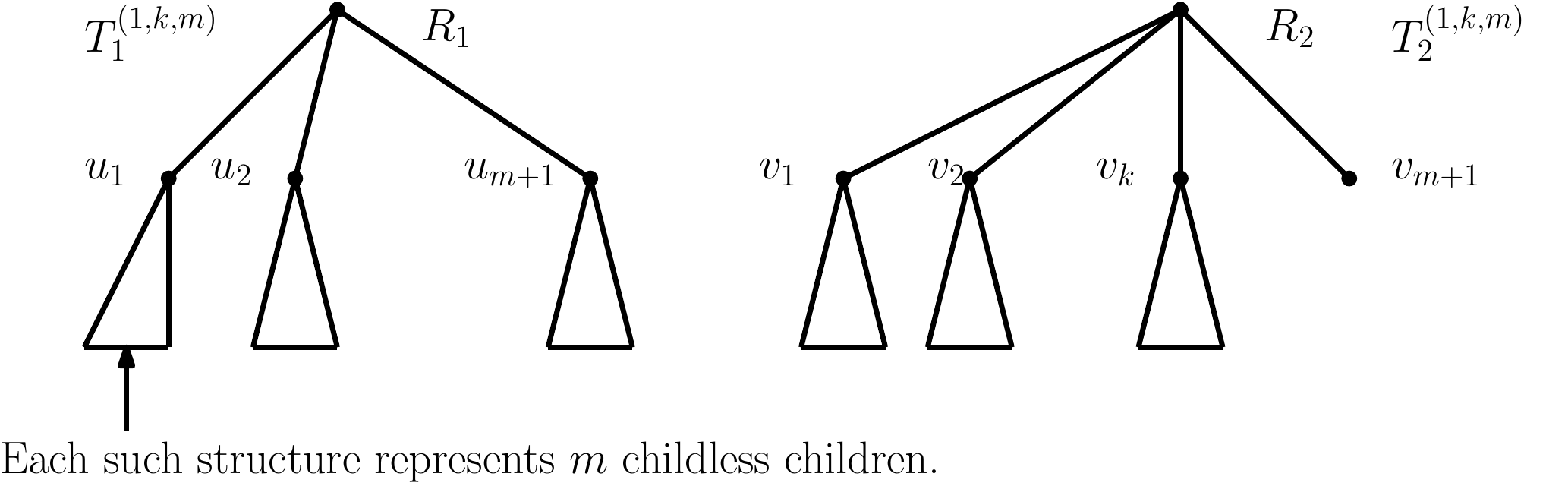}
\caption{$T_{1}^{(1,k,m)}$ and $T_{2}^{(1,k,m)}$}
\label{base_2}
\end{figure} 

Suppose we know how to construct the trees $T_{1}^{(s',k,m)}$ and $T_{2}^{(s',k,m)}$ for all $s' \leq s$ where $s$ is some positive integer, for arbitrary $k$, and for all $m \geq s' k$, such that $T_{1}^{(s',k,m)} \models \kein_{s'}$ and $T_{2}^{(s',k,m)} \models \neg \kein_{s'}$, and Duplicator wins $\ehr\left[T_{1}^{(s',k,m)}, T_{2}^{(s',k,m)}, s'; k\right]$. Now, fixing an arbitrary $k$, we construct $T_{1}^{(s+1,k,m)}$ and $T_{2}^{(s+1,k,m)}$ for all $m \geq (s+1)k$, such that $T_{1}^{(s+1,k,m)} \models \kein_{s+1}$ whereas $T_{2}^{(s+1,k,m)} \models \neg \kein_{s+1}$, and Duplicator wins $\ehr\left[T_{1}^{(s+1,k,m)}, T_{2}^{(s+1,k,m)}, s+1; k\right]$.

Note that we make use of $T_{1}^{(s,k,m)}$ and $T_{2}^{(s,k,m)}$ in constructing $T_{1}^{(s+1,k,m)}$ and $T_{2}^{(s+1,k,m)}$ for $m \geq (s+1)k$, and since this last condition ensures that $m > s k$, hence it makes sense to talk about $T_{1}^{(s,k,m)}$ and $T_{2}^{(s,k,m)}$. The inductive construction is as follows:
\begin{enumerate}
\item In $T_{1}^{(s+1,k,m)}$, the root $R_{1}$ has $m+1$ children $u_{1}, \ldots u_{m+1}$, and from each of them hangs a copy of $T_{2}^{(s,k,m)}$.
\item In $T_{2}^{(s+1,k,m)}$, the root $R_{2}$ has $m+1$ children, $v_{1}, \ldots v_{m+1}$. From each of $v_{1}, \ldots v_{m}$, hangs a copy of $T_{2}^{(s,k,m)}$, and from $v_{m+1}$ hangs a copy of $T_{1}^{(s,k,m)}$.
\end{enumerate}
An illustration is given in Figure \ref{fig:s+1}. 
\begin{figure}[h!]
    \includegraphics[width=0.8\textwidth]{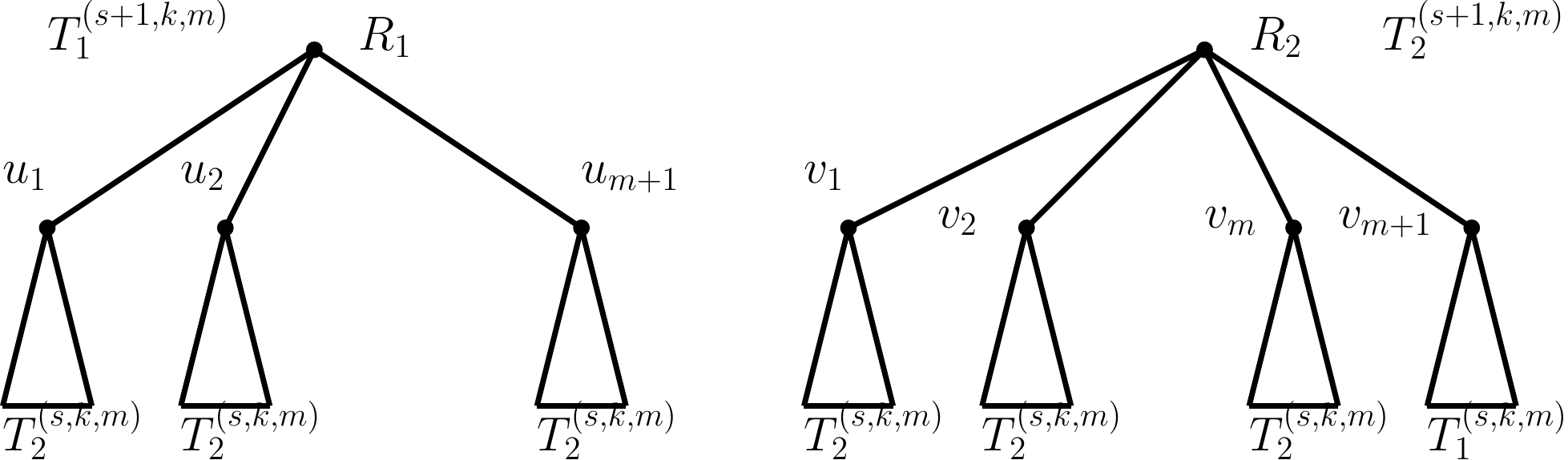}
  \caption{Trees $T_{1}^{(s+1,k,m)}$ and $T_{2}^{(s+1,k,m)}$}
\label{fig:s+1}
\end{figure}

We make sure here that indeed we have $T_{1}^{(s+1,k,m)} \models \kein_{s+1}$ and $T_{2}^{(s+1,k,m)} \models \neg \kein_{s+1}$. Recall from \eqref{recur_prop} that a rooted tree $T$ satisfies $\kein_{s+1}$ if for every child $x$ of the root $R$, the property $\neg P_{s}(x)$ holds. By induction hypothesis, $T_{2}^{(s,k,m)} \models \neg \kein_{s}$. Since for each child $u_{i}$ of the root in $T_{1}^{(s+1,k,m)}$, we have 
$$T_{1}^{(s+1,k,m)}(u_{i}) \cong T_{2}^{(s,k,m)}$$
(where the notation $\cong$ indicates a tree ismorphism map which maps the root of one tree to the root of the other), hence $\neg P_{s}(u_{i})$ holds for every $1 \leq i \leq m+1$. On the other hand, also by induction hypothesis, $T_{1}^{(s,k,m)} \models \kein_{s}$. In $T_{2}^{(s+1,k,m)}$, the root has one child $v_{m+1}$ such that 
$$T_{2}^{(s+1,k,m)}(v_{m+1}) \cong T_{1}^{(s,k,m)},$$
and hence $P_{s}(v_{m+1})$ holds. This tells us that $\neg \kein_{s+1}$ must hold for $T_{2}^{(s+1,k,m)}$. 

\section{Winning strategy for Duplicator for arbitrary $s$}\label{sec:winning}

We first describe Duplicator's winning strategy for $s = 1$. For $s = 1$, no alternation is allowed in $\ehr\left[T_{1}, T_{2}, s; k\right]$, i.e.\ either Spoiler plays the entire game on $T_{1}$ while Duplicator answers on $T_{2}$, or Spoiler plays the entire game on $T_{2}$ and Duplicator answers on $T_{1}$. Also, there are a total of $k$ many rounds now. We now describe the winning strategy for Duplicator in either scenario.

We introduce here a terminology that we shall use in our exposition of the winning strategy for Duplicator henceforth.  
\begin{defn}\label{free so far}
Suppose $i$ rounds of the game $\ehr\left[T_{1}, T_{2}, s; k\right]$ have been played. For any vertex $u \in T_{1}$, we call $u$ \emph{free up to round $i$} if no $x_{j}$, $1 \leq j \leq i$, has been selected from $T_{1}(u)$. Similarly, for any vertex $v \in T_{2}$, we call $v$ free up to round $i$ if no $y_{j}$, $1 \leq j \leq i$, has been selected from $T_{2}(v)$.
\end{defn}

First, consider the case where Spoiler plays on tree $T_{2}^{(1,k,m)}$. Throughout the game, Duplicator maintains the following conditions, and we show that she is able to maintain them using an inductive argument. Suppose $i$ rounds of the game have been played. Then, for $1 \leq j \leq i$, 
\begin{enumerate}
\item if Spoiler selects $y_{j} = v_{\ell}$ for some $1 \leq \ell \leq m+1$, then Duplicator selects $x_{j} = u_{\ell}$;
\item if Spoiler selects $y_{j}$ to be a child of the vertex $v_{\ell}$, for some $1 \leq \ell \leq m$, then Duplicator selects $x_{j}$ to be a child of $u_{\ell}$, making sure that \ref{main_2} is maintained (note that these two conditions together imply that $x_{j} = R_{1}$ if and only if $y_{j} = R_{2}$).
\end{enumerate}
Suppose Duplicator has been able to maintain these conditions up to and including round $i$. Now suppose Spoiler selects $y_{i+1}$ to be equal to some $v_{\ell}$ for $1 \leq \ell \leq m+1$. Then Duplicator sets $x_{i+1} = u_{\ell}$. If Spoiler selects $y_{i+1}$ to be a child of $v_{\ell}$ for some $1 \leq \ell \leq m$, and this child was not chosen before, then Duplicator selects $x_{i+1}$ to be a child of $u_{\ell}$ that has not been chosen before, and notice that such a child she can always find because each $u_{\ell}$ for $1 \leq \ell \leq m$ has $m$ children and $m \geq k$, and there are only $k$ many rounds. If Spoiler selects $y_{i+1}$ to be a child of $v_{\ell}$ that was already chosen before, say in the $j$-th round for some $j \leq i$, then Duplicator, to maintain \ref{main_2}, simply sets $x_{i+1} = x_{j}$. It is straightforward to see that these choices do guarantee the satisfaction of both \ref{main_1} and \ref{main_2} at the end of the game.

Now suppose Spoiler plays on tree $T_{1}^{(1,k,m)}$. Once again, Duplicator maintains the following conditions throughout the game, which we prove via inductive arguments that she is able to. Suppose $i$ rounds of the game have been played. Then for all $j, j' \in [i]$,
\begin{enumerate}
\item for $j \neq j'$, $x_{j} = x_{j'}$ if and only if $y_{j} = y_{j'}$ (this implies that $x_{j} = R_{1}$ if and only if $y_{j} = R_{2}$);
\item $x_{j} \in \left\{u_{1}, \ldots, u_{m+1}\right\}$ if and only if $y_{j} \in \left\{v_{1}, \ldots, v_{m}\right\}$;
\item $x_{j}$ is a child of $u_{\ell}$ for some $1 \leq \ell \leq m+1$ if and only if $y_{j}$ is a child of $v_{\ell'}$ for some $1 \leq \ell' \leq m$;
\item for $j \neq j'$, $\pi(x_{j'}) = x_{j}$ if and only if $\pi(y_{j'}) = y_{j}$;
\item $x_{j} = u_{\ell}$ for some $1 \leq \ell \leq m+1$ such that $u_{\ell}$ has been free up to round $j-1$ if and only if $y_{j} = v_{\ell'}$ for some $1 \leq \ell' \leq m$ such that $v_{\ell'}$ has been free up to round $j-1$;
\item for $j \neq j'$, $x_{j}$ and $x_{j'}$ are siblings, i.e.\ they share a common parent, if and only if $y_{j}$ and $y_{j'}$ are siblings.
\end{enumerate}
Suppose Duplicator has been able to maintain all of these conditions up to and including round $i$, for some $i \leq k-1$. Of course, if Spoiler selects $x_{i+1} = x_{j}$ for some $j \leq i$, then Duplicator sets $y_{i+1} = y_{j}$. So, for the rest of the inductive argument, assume that $x_{i+1}$ is distinct from all previously chosen vertices.

Suppose Spoiler selects $x_{i+1}$ to be some $u_{\ell}$ for $1 \leq \ell \leq m+1$ such that $u_{\ell}$ has been free up to round $i$. Note that $m \geq k$ and there are $k$ rounds in total, hence there has to be at least one vertex $v_{\ell'}$ for some $1 \leq \ell' \leq m$ free up to round $i$. Hence Duplicator finds such a $v_{\ell'}$ and sets $y_{i+1} = v_{\ell'}$. Suppose Spoiler selects $x_{i+1}$ to be $u_{\ell}$ such that there exists $j \leq i$ with $x_{j}$ a child of $u_{\ell}$; by induction hypothesis, we know that $y_{j}$ is a child of $v_{\ell'}$ for some $1 \leq \ell' \leq m$, and we set $y_{i+1} = v_{\ell'}$. 

If Spoiler selects $x_{i+1}$ to be a child of some vertex $u_{\ell}$ for $1 \leq \ell \leq m+1$ such that $u_{\ell}$ has been free up to round $i$, then Duplicator once again finds $v_{\ell'}$ for some $1 \leq \ell' \leq m$ that has been free up to round $i$ (possible for the same reason as argued above), and then selects $y_{i+1}$ to be any child of $v_{\ell'}$. Suppose Spoiler selects $x_{i+1}$ to be a child of $u_{\ell}$, for some $1 \leq \ell \leq m+1$, such that for some $j \leq i$, we have $x_{j} = u_{\ell}$. Then by induction hypothesis, we know that $y_{j}$ must equal $v_{\ell'}$ for some $1 \leq \ell' \leq m$. Duplicator selects a child of $v_{\ell'}$, distinct from all previously chosen vertices, as $y_{i+1}$. As there are $m$ children of $v_{\ell'}$ and a total of $k$ rounds with $m \geq k$, hence she is able to make such a selection. Suppose Spoiler selects $x_{i+1}$ to be a child of $u_{\ell}$ for some $1 \leq \ell \leq m+1$ such that for some $j \leq i$, the vertex $x_{j}$ is also a child of $u_{\ell}$. Again by induction hypothesis we know that $y_{j}$ is a child of $v_{\ell'}$ for some $1 \leq \ell' \leq m$, and Duplicator sets $y_{i+1}$ to be a child of $v_{\ell'}$ that is different from $y_{j}$ as well as any other previously chosen vertex. Note that, since each $u_{\ell}$ and each $v_{\ell'}$ has $m$ children and $m \geq k$, where $k$ is the total number of rounds, there is always a choice of $y_{i+1}$ as a child of $v_{\ell'}$ which does not coincide with any previously chosen vertex. 

This exhausts all possible moves by Spoiler and we have shown that Duplicator is able to respond in each case such that all the conditions mentioned above are maintained. 

We now come to the winning strategy for Duplicator on $\ehr\left[T_{1}^{(s,k,m)}, T_{2}^{(s,k,m)}, s; k\right]$ for arbitrary $k$ and $m \geq s k$, and this is described as an inductive strategy where the induction happens on $s$. Thus, we assume that we already have a winning strategy for Duplicator for the game $\ehr\left[T_{1}^{(s,k,m)}, T_{2}^{(s,k,m)}, s; k\right]$, and then devise a strategy for her for the game $\ehr\left[T_{1}^{(s+1,k,m)}, T_{2}^{(s+1,k,m)}, s+1; k\right]$ for the same $k$ and $m \geq (s+1)k$. The analysis is split into two parts, according to which tree Spoiler starts playing on, and these are described separately, in detail, in Subsections \ref{start on T_2} and \ref{start on T_1}.

\subsection{Spoiler starts playing on $T_{2}^{(s+1,k,m)}$}\label{start on T_2}
Suppose Spoiler starts playing the game $\ehr\left[T_{1}^{(s+1,k,m)}, T_{2}^{(s+1,k,m)}, s+1; k\right]$ on $T_{2}^{(s+1,k,m)}$. Recall, from the inductive construction of the trees in Section~\ref{sec:construction}, that $T_{1}^{(s,k,m)}$ and $T_{2}^{(s,k,m)}$ are as illustrated in Figure~\ref{fig:s}.
\begin{figure}[h!]
  \centering
    \includegraphics[width=0.8\textwidth]{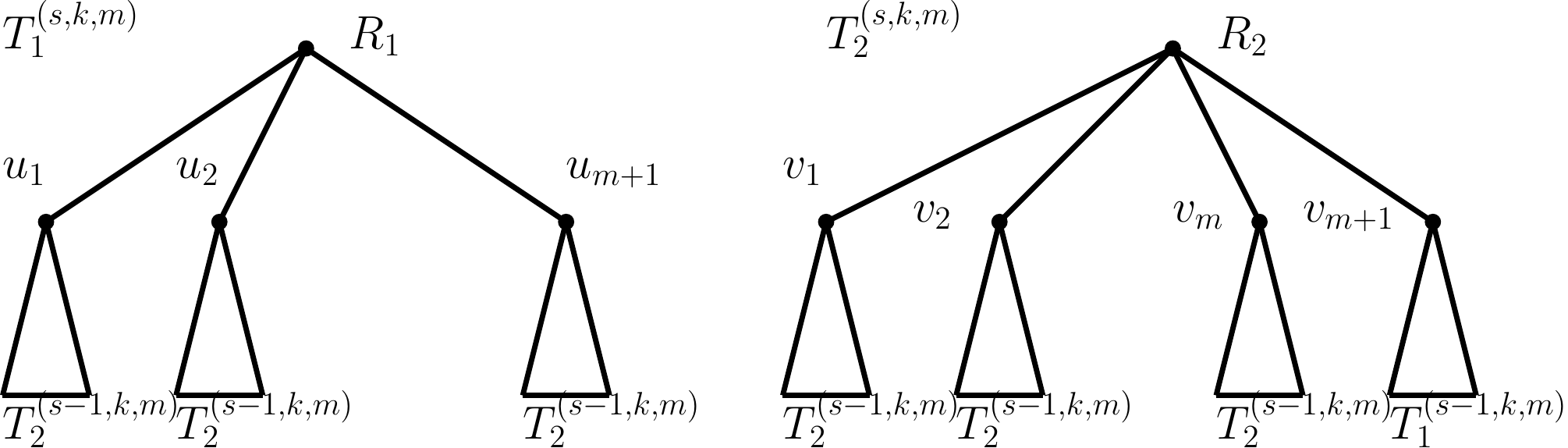}
  \caption{Trees $T_{1}^{(s,k,m)}$ and $T_{2}^{(s,k,m)}$}
  \label{fig:s}
\end{figure}
 
\begin{ind}\label{induction hyp 1}
For every $1 \leq t \leq m+1$ and $1 \leq t' \leq m$, fix a tree isomorphism $\varphi^{(s)}_{t, t'}: T_{1}^{(s,k,m)}(u_{t}) \rightarrow T_{2}^{(s,k,m)}(v_{t'})$ (this is possible as both are copies of $T_{2}^{(s-1,k,m)}$), such that $\varphi^{(s)}_{t,t'}(u_{t}) = v_{t'}$. Fix $\ell$ pairs of vertices $(x_{1}, y_{1}), \ldots (x_{\ell}, y_{\ell})$ in $T_{1}^{(s,k,m)} \times T_{2}^{(s,k,m)}$ such that $\ell \leq k$ and they satisfy the following conditions:
\begin{enumerate}[label=(IH\arabic{*})]
\item \label{ind hyp 1} for each $1 \leq i \leq \ell$, we have $x_{i} \in \bigcup_{t=1}^{m+1} T_{1}^{(s,k,m)}(u_{t})$ and $y_{i} \in \bigcup_{t'=1}^{m} T_{2}^{(s,k,m)}(v_{t'})$.
\item \label{ind hyp 2} For $1 \leq i \neq j \leq \ell$, if $x_{i}, x_{j} \in T_{1}^{(s,k,m)}(u_{t})$ for some $1 \leq t \leq m+1$, then we can find some $1 \leq t' \leq m$ such that $y_{i}, y_{j} \in T_{2}^{(s,k,m)}(v_{t'})$. The converse also holds, i.e.\ if for $1 \leq i \neq j \leq \ell$, if $y_{i}, y_{j} \in T_{2}^{(s,k,m)}(v_{t'})$ for some $1 \leq t' \leq m$, then we can find some $1 \leq t \leq m+1$ such that $x_{i}, x_{j} \in T_{1}^{(s,k,m)}(u_{t})$.
\item \label{ind hyp 3} For every $1 \leq i \leq \ell$, if $x_{i} \in T_{1}^{(s,k,m)}(u_{t})$ and $y_{i} \in T_{2}^{(s,k,m)}(v_{t'})$ for some $1 \leq t \leq m+1$ and $1 \leq t' \leq m$, then $y_{i} = \varphi^{(s)}_{t,t'}(x_{i})$.
\end{enumerate}
As in Definitions~\ref{EHR_first} and \ref{EHR_alt}, we set $x_{0} = R_{1}$, the root of $T_{1}^{(s,k,m)}$, and $y_{0} = R_{2}$, the root of $T_{2}^{(s,k,m)}$. Then Duplicator will be able to win $\ehr\left[T_{1}^{(s,k,m)}, T_{2}^{(s,k,m)}, s; k\right]$, where Spoiler starts playing on $T_{2}^{(s,k,m)}$, with the pairs $(x_{0}, y_{0}), \ldots (x_{\ell}, y_{\ell})$ considered as designated pairs. 
\end{ind} 

\begin{rem}
It is helpful to imagine designated pairs of vertices as pairs chosen \emph{before} the game has even begun. For example, in Induction hypothesis~\ref{induction hyp 1}, one could visualize the designated pairs $(x_{0}, y_{0}), \ldots (x_{\ell}, y_{\ell})$ as pairs already selected / provided to the players from $\ell+1$ many rounds that occurred \emph{before the actual game}, and then the game $\ehr\left[T_{1}^{(s,k,m)}, T_{2}^{(s,k,m)}, s; k\right]$ of $s k$ many rounds begins, which Duplicator is able to win, taking into account these $(\ell+1)$ many pairs as well, if the conditions given in Induction hypothesis~\ref{induction hyp 1} hold. In other words, suppose in $\ehr\left[T_{1}^{(s,k,m)}, T_{2}^{(s,k,m)}, s; k\right]$, the pair selected in round $i$ from $T_{1}^{(s,k,m)} \times T_{2}^{(s,k,m)}$ is denoted by $(x_{i+\ell}, y_{i+\ell})$, for all $1 \leq i \leq s k$. Then Duplicator has to maintain \emph{all} of the following conditions to win $\ehr\left[T_{1}^{(s,k,m)}, T_{2}^{(s,k,m)}, s; k\right]$ with $(x_{0}, y_{0}), \ldots (x_{\ell}, y_{\ell})$ as designated pairs:
\begin{enumerate}
\item $x_{i} = \pi(x_{j}) \Leftrightarrow y_{i} = \pi(y_{j})$ for all $0 \leq i \neq j \leq s k + \ell$;
\item $x_{i} = x_{j} \Leftrightarrow y_{i} = y_{j}$ for all $0 \leq i \neq j \leq s k + \ell$.
\end{enumerate}
\end{rem}

\par Armed with Induction hypothesis~\ref{induction hyp 1}, we now prove that the corresponding claim holds on $T_{1}^{(s+1,k,m)}$ and $T_{2}^{(s+1,k,m)}$ where $m \geq (s+1)k$. First, we need to fix $\ell \leq k$ designated pairs that satisfy conditions analogous to \eqref{ind hyp 1} through \eqref{ind hyp 3}. So, referring to Figure \ref{fig:s+1}, we fix, for every $1 \leq t \leq m+1$ and every $1 \leq t' \leq m$, an isomorphism $\varphi_{t,t'}^{(s+1)}: T_{1}^{(s+1,k,m)}(u_{t}) \rightarrow T_{2}^{(s+1,k,m)}(v_{t'})$ (again possible because both $T_{1}^{(s+1,k,m)}$ and $T_{2}^{(s+1,k,m)}(v_{t'})$ are both copies of $T_{2}^{(s,k,m)}$); next, we fix an arbitrary collection of pairs $(x_{1}, y_{1}), \ldots (x_{\ell}, y_{\ell})$ for any non-negative integer $\ell \leq k$ (if $\ell = 0$, then we choose only the roots $R_{1}$ and $R_{2}$ as $x_{0}$ and $y_{0}$ respectively), such that the following hold: 
\begin{enumerate}[label=(C\arabic{*})]
\item \label{cond 1 designated} for each $1 \leq i \leq \ell$, we have $x_{i} \in \bigcup_{t=1}^{m+1} T_{1}^{(s+1,k,m)}(u_{t})$ and $y_{i} \in \bigcup_{t'=1}^{m} T_{2}^{(s+1,k,m)}(v_{t'})$.
\item \label{cond 2 designated} For $1 \leq i \neq j \leq \ell$, if $x_{i}, x_{j} \in T_{1}^{(s+1,k,m)}(u_{t})$ for some $1 \leq t \leq m+1$, then we can find some $1 \leq t' \leq m$ such that $y_{i}, y_{j} \in T_{2}^{(s+1,k,m)}(v_{t'})$. The converse also holds, i.e.\ if for $1 \leq i \neq j \leq \ell$, if $y_{i}, y_{j} \in T_{2}^{(s+1,k,m)}(v_{t'})$ for some $1 \leq t' \leq m$, then we can find some $1 \leq t \leq m+1$ such that $x_{i}, x_{j} \in T_{1}^{(s+1,k,m)}(u_{t})$.
\item \label{cond 3 designated} For every $1 \leq i \leq \ell$, if $x_{i} \in T_{1}^{(s+1,k,m)}(u_{t})$ and $y_{i} \in T_{2}^{(s+1,k,m)}(v_{t'})$, then $y_{i} = \varphi^{(s+1)}_{t,t'}(x_{i})$.
\end{enumerate}

We now provide a strategy for Duplicator to win $\ehr\left[T_{1}^{(s+1,k,m)}, T_{2}^{(s+1,k,m)}, s+1; k\right]$ with $(x_{0}, y_{0}), (x_{1}, y_{1}) \ldots (x_{\ell}, y_{\ell})$ as designated vertices. Recall again that $x_{0} = R_{1}$, the root of $T_{1}^{(s+1,k,m)}$, and $y_{0} = R_{2}$, the root of $T_{2}^{(s+1,k,m)}$.

\begin{assume}\label{assume 1}
Since $\ell \leq k$ and $m \geq (s+1)k$ with $s \geq 2$, there exists at least one $1 \leq i_{0} \leq m+1$ such that no $x_{j}, 1 \leq j \leq \ell$, has been chosen from $T_{1}^{(s+1,k,m)}(u_{i_{0}})$. Because for each $1 \leq i \leq m+1$, the subtree $T_{1}^{(s+1,k,m)}(u_{i})$ is a copy of $T_{2}^{(s,k,m)}$, we can therefore assume without loss of generality that $i_{0} = m+1$.
\end{assume}
Assumption \ref{assume 1} is made throughout Subsection~\ref{start on T_2}.

\par Let us now take a closer look at the trees $T_{1}^{(s+1,k,m)}$ and $T_{2}^{(s+1,k,m)}$, especially in more detail the subtrees $T_{1}^{(s+1,k,m)}(u_{m+1})$ and $T_{2}^{(s+1,k,m)}(v_{m+1})$. These are illustrated in Figures \eqref{fig:s+1:1:detailed} and \eqref{fig:s+1:2:detailed}.

\begin{figure}[h!]
\includegraphics[width = 0.7\textwidth]{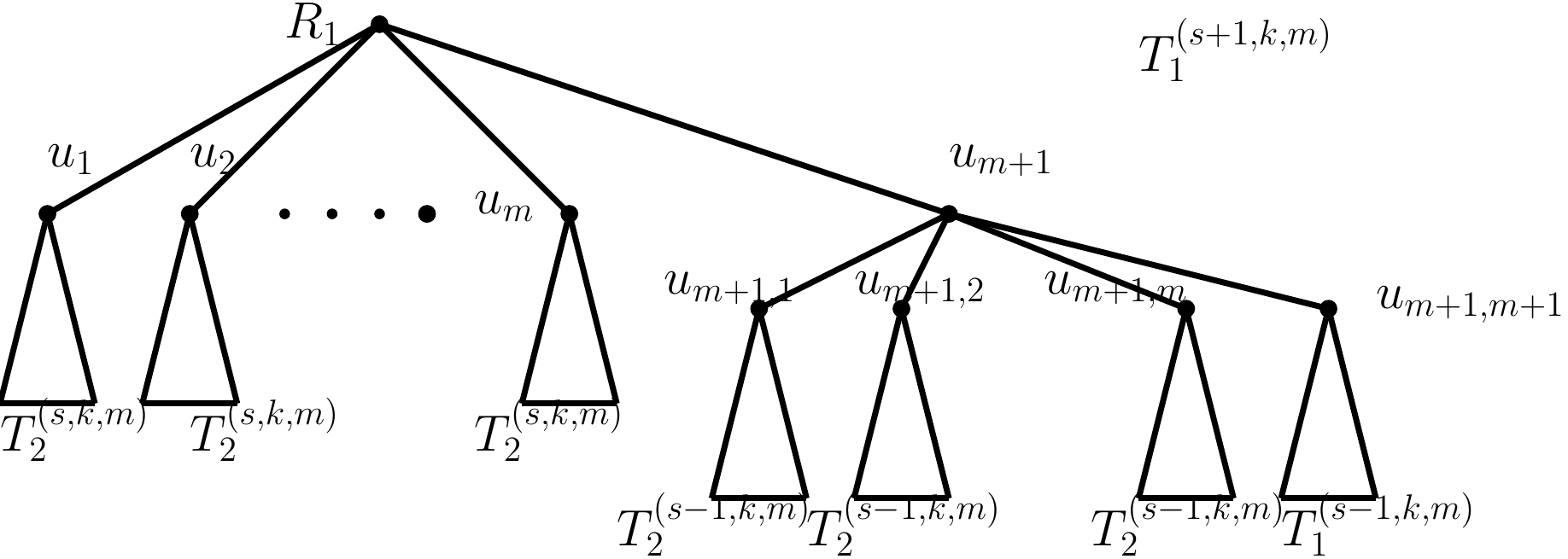}
\caption{$T_{1}^{(s+1,k,m)}$ with detailed view of $T_{1}^{(s+1,k,m)}(u_{m+1})$}
\label{fig:s+1:1:detailed}
\end{figure}

\begin{figure}[h!]
\includegraphics[width = 0.7\textwidth]{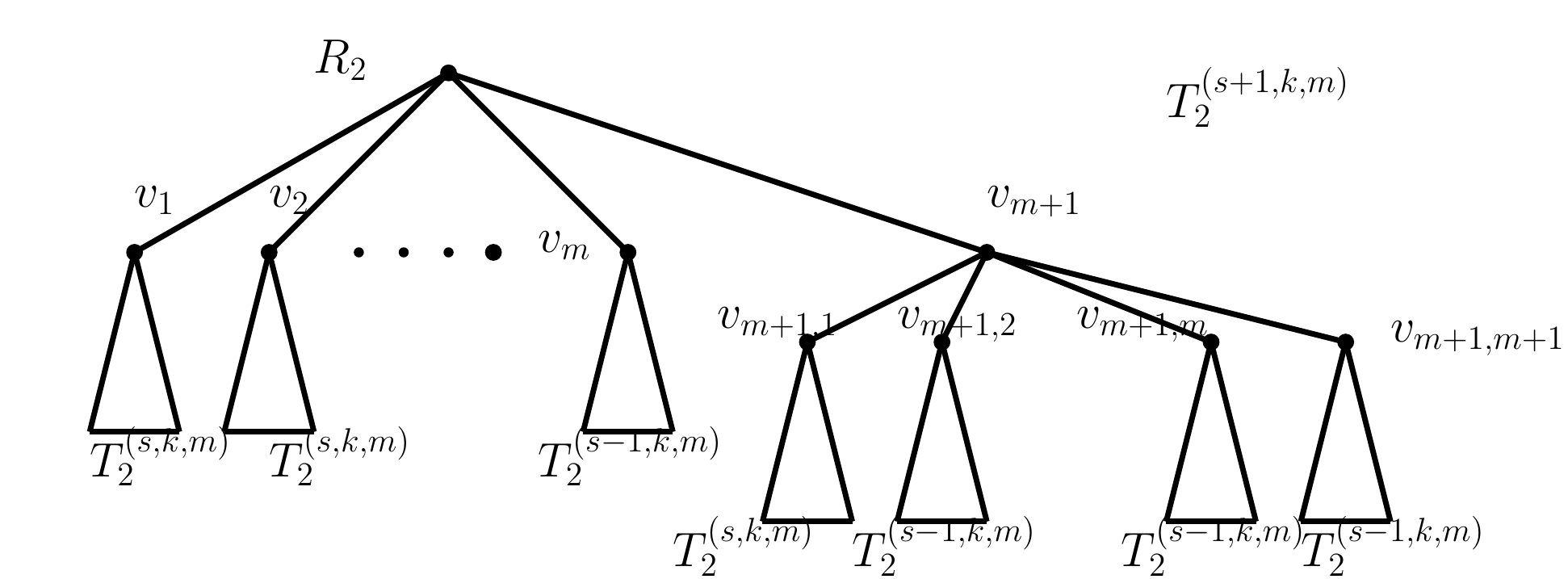}
\caption{$T_{2}^{(s+1,k,m)}$ with detailed view of $T_{2}^{(s+1,k,m)}(v_{m+1})$}
\label{fig:s+1:2:detailed}
\end{figure}

\par Let $(x_{i+\ell}, y_{i+\ell})$ be the pair selected from $T_{1}^{(s+1,k,m)} \times T_{2}^{(s+1,k,m)}$ in round $i$ for $1 \leq i \leq (s+1) k$. As is our convention for Subsection~\ref{start on T_2}, Spoiler plays the first $k$ rounds on $T_{2}^{(s+1,k,m)}$. In the following paragraph, we state some conditions Duplicator maintains on the configuration $\left\{(x_{0}, y_{0}), \ldots, (x_{\ell}, y_{\ell})\right\} \cup \{(x_{i+\ell}, y_{i+\ell}): 1 \leq i \leq k\}$ (i.e.\ on the pairs of vertices resulting from the first $k$ rounds, along with the designated pairs). And we prove that she can indeed maintain these conditions by using an inductive argument (within the first $k$ rounds). 

First, Duplicator fixes any tree isomorphism $\varphi_{t,t'}^{(s)}: T_{1}^{(s+1,k,m)}(u_{m+1,t}) \rightarrow T_{2}^{(s+1,k,m)}(v_{m+1,t'})$, with $\varphi_{t,t'}^{(s)}(u_{m+1,t}) = v_{m+1,t'}$, for all $1 \leq t \leq m$ and $1 \leq t' \leq m+1$. This is possible as each $T_{1}^{(s+1,k,m)}(u_{m+1,t})$, $1 \leq t \leq m$, as well as each $T_{2}^{(s+1,k,m)}(v_{m+1,t'})$, $1 \leq t' \leq m+1$, is a copy of $T_{2}^{(s-1,k,m)}$. Suppose $p \leq k-1$ rounds of the game have been played. The conditions on the configuration $\left\{(x_{0}, y_{0}), \ldots, (x_{\ell}, y_{\ell})\right\} \cup \left\{(x_{\ell+1}, y_{\ell+1}), \ldots (x_{\ell+p}, y_{\ell+p})\right\}$ are as follows: 

\begin{enumerate}[label=(A\arabic{*})]

\item\label{first k rounds 1} $x_{i+\ell} = R_{1} \Leftrightarrow y_{i+\ell} = R_{2}$. If $x_{i+\ell} \in T_{1}^{(s+1,k,m)}(u_{t})$ for some $1 \leq t \leq m$, then $y_{i+\ell} \in T_{2}^{(s+1,k,m)}(v_{t'})$ for some $1 \leq t' \leq m$, and vice versa; moreover, in this case, $y_{i+\ell} = \varphi^{s+1}_{t, t'}(x_{i+\ell})$. 

Note that if $x_{i+\ell} \in T_{1}^{(s+1,k,m)}(u_{m+1})$, then actually $x_{i+\ell} \in \bigcup_{t=1}^{m} \in T_{1}^{(s+1,k,m)}(u_{m+1,t}) \bigcup \{u_{m+1}\}$. If $x_{i+\ell} = u_{m+1}$ then $y_{i+\ell} = v_{m+1}$, and vice versa. We have $x_{i+\ell} \in T_{1}^{(s+1,k,m)}(u_{m+1,t})$ for some $1 \leq t \leq m$ if and only if $y_{i+\ell} \in T_{2}^{(s+1,k,m)}(u_{m+1,t'})$ for some $1 \leq t' \leq m+1$, and moreover, in this case, $y_{i+\ell} = \varphi^{(s)}_{t,t'}(x_{i+\ell})$.

\item\label{first k rounds 2} Suppose we have $x_{i+\ell}, x_{j} \in T_{1}^{(s+1,k,m)}(u_{t})$ for some $1 \leq i \leq p$, some $1 \leq j \leq p+\ell$, and some $1 \leq t \leq m$. Then there exists some $1 \leq t' \leq m$ such that $y_{i+\ell}, y_{j} \in T_{2}^{(s+1,k,m)}(v_{t'})$.

The converse of this statement is true as well. That is, if for some $1 \leq i \leq p$, some $1 \leq j \leq p+\ell$, and some $1 \leq t' \leq m$, we have $y_{i+\ell}, y_{j} \in T_{2}^{(s+1,k,m)}(v_{t'})$, then there exists some $1 \leq t' \leq m$ such that $x_{i+\ell}, x_{j} \in T_{1}^{(s+1,k,m)}(u_{t})$.

\item\label{first k rounds 3} Suppose for $1 \leq i \neq j \leq p$ we have $x_{i+\ell}, x_{j+\ell} \in T_{1}^{(s+1,k,m)}(u_{m+1, t})$ for some $1 \leq t \leq m$. Then there exists $1 \leq t' \leq m+1$ such that $y_{i+\ell}, y_{j+\ell} \in  T_{2}^{(s+1,k,m)}(v_{m+1,t'})$. The converse of this statement is also true, i.e.\ if for $1 \leq i \neq j \leq p$, we have $y_{i+\ell}, y_{j+\ell} \in T_{2}^{(s+1,k,m)}(v_{m+1, t'})$ for some $1 \leq t' \leq m+1$, then there exists $1 \leq t \leq m$ such that $x_{i+\ell}, x_{j+\ell} \in T_{1}^{(s+1,k,m)}(u_{m+1,t})$.
\end{enumerate}

Suppose Duplicator has been able to maintain all these conditions up to and including round $p$. Now, Spoiler chooses $y_{\ell+p+1}$ from $T_{2}^{(s+1,k,m)}$. Here is how Duplicator replies:
\begin{enumerate}
\item If $y_{p+\ell+1} = R_{2}$, the root of $T_{2}^{(s+1,k,m)}$, then Duplicator chooses $x_{p+\ell+1} = R_{1}$, the root of $T_{1}^{(s+1,k,m)}$.
\item Suppose $y_{p+\ell+1} \in \bigcup_{t'=1}^{m} T_{2}^{(s+1,k,m)}(v_{t'})$. Then there are a few possible cases, as follows:
\begin{enumerate}
\item \label{needed later 1} Suppose there exists $1 \leq j \leq \ell+p$ such that $y_{j}$ and $y_{p+\ell+1}$ both belong to the same $T_{2}^{(s+1,k,m)}(v_{t'})$ for some $1 \leq t' \leq m$. Then Duplicator finds $t$ such that $x_{j} \in T_{1}^{(s+1,k,m)}(u_{t})$. Note that if $1 \leq j \leq \ell$, then $1 \leq t \leq m$ because of Assumption~\ref{assume 1}), and if $\ell+1 \leq j \leq p+\ell$, then $1 \leq t \leq m$ because of induction hypothesis \ref{first k rounds 1}. Then Duplicator sets $x_{p+\ell+1} = \left\{\varphi^{(s+1)}_{t,t'}\right\}^{-1}(y_{p+\ell+1})$.

\item \label{needed later 2} Suppose $y_{p+\ell+1} \in T_{2}^{(s+1,k,m)}(v_{t'})$ for some $1 \leq t' \leq m$, such that $v_{t'}$ is free up to round $p$, i.e.\ $y_{j} \notin T_{2}^{(s+1,k,m)}(v_{t'})$ for all $1 \leq j \leq \ell+p$ (recall Definition~\ref{free so far}). Then Duplicator finds a $1 \leq t \leq m$ such that $u_{t}$ is free up to round $p$. She can always find such a $t$ as $m \geq (s+1)k > 2k > p+\ell$. She then sets $x_{p+\ell+1} = \left\{\varphi^{(s+1)}_{t,t'}\right\}^{-1}(y_{p+\ell+1})$. 

\end{enumerate}

\item Now suppose $y_{p+\ell+1} \in T_{2}^{(s+1,k,m)}(v_{m+1})$. Again, there are a few possible cases:
\begin{enumerate}
\item If $y_{p+\ell+1} = v_{m+1}$, then Duplicator sets $x_{p+\ell+1} = u_{m+1}$.
\item If $y_{p+\ell+1} \in T_{2}^{(s+1,k,m)}(v_{m+1,t'})$ for some $1 \leq t' \leq m+1$ such that there exists some $1 \leq i \leq p$ with $y_{i+\ell} \in T_{2}^{(s+1,k,m)}(v_{m+1,t'})$, then Duplicator finds the $t$ (where $1 \leq t \leq m$ by induction hypothesis \ref{first k rounds 1}) such that $x_{i+\ell} \in T_{1}^{(s+1,k,m)}(u_{m+1,t})$. Then she sets $x_{p+\ell+1} = \left\{\varphi^{(s)}_{t,t'}\right\}^{-1}(y_{p+\ell+1})$.
\item If $y_{p+\ell+1} \in T_{2}^{(s+1,k,m)}(v_{m+1,t'})$ for some $1 \leq t' \leq m+1$ such that $y_{i+\ell} \notin T_{2}^{(s+1,k,m)}(v_{m+1,t'})$ for all $1 \leq i \leq p$, then Duplicator finds a $1 \leq t \leq m$ such that $u_{m+1,t}$ is free up to round $p$ (again, possible since $m \geq (s+1)k > 2k > p+\ell$) and sets $x_{p+\ell+1} = \left\{\varphi^{(s)}_{t,t'}\right\}^{-1}(y_{p+\ell+1})$.

\end{enumerate}
\end{enumerate}

Let us focus now on $T_{2}^{(s+1,k,m)}(v_{m+1})$ and $T_{1}^{(s+1,k,m)}(u_{m+1})$ only. Suppose the only pairs selected up to round $k$ that are in $T_{1}^{(s+1,k,m)}(u_{m+1}) \times T_{2}^{(s+1,k,m)}(v_{m+1})$ are $\left(x_{i_{1}+\ell}, y_{i_{1}+\ell}\right), \ldots \left(x_{i_{r}+\ell}, y_{i_{r}+\ell}\right)$. Now, $T_{1}^{(s+1,k,m)}(u_{m+1})$ is a copy of $T_{2}^{(s,k,m)}$ and $T_{2}^{(s+1,k,m)}(v_{m+1})$ is a copy of $T_{1}^{(s,k,m)}$. From \ref{first k rounds 1} and \ref{first k rounds 3}, we can see that the pairs $(y_{i_{1}+\ell}, x_{i_{1}+\ell}), \ldots (y_{i_{r}+\ell}, x_{i_{r}+\ell})$ satisfy Conditions \ref{ind hyp 1} through \ref{ind hyp 3}. 
\begin{conc}\label{conc 1}
So Duplicator, by Induction hypothesis \ref{induction hyp 1}, will win $\ehr[T_{2}^{(s+1,k,m)}(v_{m+1}), T_{1}^{(s+1,k,m)}(u_{m+1}), s; k]$ with designated pairs $(y_{i_{1}+\ell}, x_{i_{1}+\ell}), \ldots (y_{i_{r}+\ell}, x_{i_{r}+\ell})$ (notice the deliberate writing of $T_{2}^{(s+1,k,m)}(v_{m+1})$ before $T_{1}^{(s+1,k,m)}(u_{m+1})$).
\end{conc}

\par Now consider the subsequent game of remaining $s k$ rounds. We call this the \emph{second part of the game}. For this part of the game, we really can split the tree $T_{1}^{(s+1,k,m)}$ into 
\begin{equation}\label{split 1}
\{R_{1}\} \cup S_{1} \cup T_{1}^{(s+1,k,m)}(u_{m+1}), \text{ where } S_{1} = \left\{\bigcup_{t=1}^{m} T_{1}^{(s+1,k,m)}(u_{t})\right\},
\end{equation}
and the tree $T_{2}^{(s+1,k,m)}$ into
\begin{equation}\label{split 2}
\{R_{2}\} \cup S_{2} \cup T_{2}^{(s+1,k,m)}(v_{m+1}), \text{ where } S_{2} = \left\{\bigcup_{t'=1}^{m} T_{1}^{(s+1,k,m)}(v_{t'})\right\}.
\end{equation}
Suppose $p$ rounds of the game have been played, with $k < p \leq (s+1)k$. Duplicator maintains the following conditions on the configuration $\left\{(x_{i}, y_{i}): 0 \leq i \leq p+\ell\right\}$:
\begin{enumerate}[label=(B\arabic{*})]
\item \label{b_1} For $0 \leq i \leq p$, we have $x_{i} = R_{1} \Leftrightarrow y_{i} = R_{2}$.
\item \label{b_2} For $0 \leq i \leq p$, we have $x_{i} \in S_{1} \Leftrightarrow y_{i} = S_{2}$. In this case, if $x_{i} \in T_{1}^{(s+1,k,m)}(u_{t})$ for some $1 \leq t \leq m$ and $y_{i} \in T_{2}^{(s+1,k,m)}(v_{t'})$ for some $1 \leq t' \leq m$, then $y_{i} = \varphi^{(s+1)}_{t,t'}(x_{i})$.
\item \label{b_3} For $0 \leq i, i' \leq p$, if $x_{i}, x_{i'} \in T_{1}^{(s+1,k,m)}(u_{t})$ for some $1 \leq t \leq m$, then there exists $1 \leq t' \leq m$ with $y_{i}, y_{i'} \in T_{2}^{(s+1,k,m)}(v_{t'})$.
\item \label{b_4} For $0 \leq j \leq p$, we have $x_{j} \in T_{1}^{(s+1,k,m)}(u_{m+1}) \Leftrightarrow y_{j} \in T_{2}^{(s+1,k,m)}(v_{m+1})$. Suppose $(i-1) k + 1 \leq p \leq ik$, where $2 \leq i \leq s+1$. Suppose the number of pairs of vertices selected from $T_{2}^{(s+1,k,m)}(v_{m+1}) \times T_{1}^{(s+1,k,m)}(u_{m+1})$ between rounds $k+1$ and $2k$ is $j_{1}$, between rounds $2k+1$ and $3k$ is $j_{2}$, $\ldots$, between rounds $(i-2)k +1$ and $(i-1)k$ is $j_{i-2}$, between rounds $(i-1)k+1$ and $p$ is $j'$. Then Duplicator wins $\ehrw\left[T_{2}^{(s+1,k,m)}(v_{m+1}), T_{1}^{(s+1,k,m)}(u_{m+1}); j_{1}, j_{2}, \ldots j_{i-2}, j'+ik-p, \underbrace{k, \ldots, k}_\text{$(s+1-i)$ many}\right]$ with designated pairs $(y_{i_{1}+\ell}, x_{i_{1}+\ell}), \ldots (y_{i_{r}+\ell}, x_{i_{r}+\ell})$, where the first $j_{1} + j_{2} + \ldots j_{i-2} + j'$ rounds have been played out, with Spoiler starting on $T_{1}^{(s+1,k,m)}(u_{m+1})$.

\end{enumerate}

It is straightforward to see that Duplicator is able to maintain all the conditions stated above, via a similar inductive argument as has been detailed in this paper before. She is able to maintain Condition \ref{b_4} because of Conclusion \ref{conc 1} and Lemma \ref{EHR_alt_implies_EHR_weak}. This brings us to the end of the inductive proof. \qed

\subsection{When Spoiler starts playing on $T_{1}^{(s,k,m)}$}\label{start on T_1}
The construction remains the same as before. The induction hypothesis in this case is simpler to state. 
\begin{ind}\label{induction hyp 2}
Duplicator wins $\ehr\left[T_{1}^{(s,k,m)}, T_{2}^{(s,k,m)}, s; k\right]$ when Spoiler starts on $T_{1}^{(s,k,m)}$. 
\end{ind}

We show here the proof by induction. Refer to Figures \ref{fig:s+1:1:detailed} and \ref{fig:s+1:2:detailed}. Again, we chalk out the detailed strategy of Duplicator for the first $k$ rounds, where we do not use Induction hypothesis \ref{induction hyp 2}. For every $1 \leq t \leq m+1$ and $1 \leq t' \leq m$, she fixes any tree isomorphism $\varphi^{(s+1)}_{t,t'}: T_{1}^{(s+1,k,m)}(u_{t}) \rightarrow T_{2}^{(s+1,k,m)}(v_{t'})$ with $\varphi^{(s+1)}_{t,t'}(u_{t}) = v_{t'}$. This is possible since each of $T_{1}^{(s+1,k,m)}(u_{t})$ and $T_{2}^{(s+1,k,m)}(v_{t'})$ is a copy of $T_{2}^{(s,k,m)}$ for all $1 \leq t \leq m+1$ and $1 \leq t' \leq m$. For the first $p$ rounds, for all $1 \leq p \leq k$, she maintains the following conditions on the configuration $\left\{(x_{i}, y_{i}): 1 \leq i \leq p\right\}$:
\begin{enumerate}[label=(A'\arabic{*})]
\item For all $1 \leq i \leq p$, we have $x_{i} = R_{1} \Leftrightarrow y_{i} = R_{2}$.
\item For all $1 \leq i \leq p$, we have $x_{i} \in T_{1}^{(s+1,k,m)}(u_{t})$ for some $1 \leq t \leq m+1$ if and only if $y_{i} \in T_{2}^{(s+1,k,m)}(v_{t'})$ for some $1 \leq t' \leq m$. Furthermore $y_{i} = \varphi^{(s+1)}_{t,t'}(x_{i})$.
\item If $x_{i}, x_{j} \in T_{1}^{(s+1,k,m)}(u_{t})$ for some $1 \leq i \neq j \leq p$ and some $1 \leq t \leq m+1$, then there exists some $1 \leq t' \leq m$ such that $y_{i}, y_{j} \in T_{2}^{(s+1,k,m)}(v_{t'})$. The converse is also true, i.e.\ if $y_{i}, y_{j} \in T_{2}^{(s+1,k,m)}(v_{t'})$ for some $1 \leq i \neq j \leq p$ and some $1 \leq t' \leq m$, then there exists some $1 \leq t \leq m+1$ such that $x_{i}, x_{j} \in T_{1}^{(s+1,k,m)}(u_{t})$.
\end{enumerate} 

That Duplicator is able to maintain these conditions can be shown by induction on $p$ where $1 \leq p \leq k-1$. So, suppose the first $p$ rounds have been played. In the $(p+1)$-st round Spoiler chooses $x_{p+1}$ from $T_{1}^{(s+1,k,m)}$. Duplicator replies as follows:
\begin{enumerate}
\item If $x_{p+1} = R_{1}$, then Duplicator sets $y_{p+1} = R_{2}$.
\item If $x_{p+1} \in T_{1}^{(s+1,k,m)}(u_{t})$ for some $1 \leq t \leq m+1$ such that there exists some $1 \leq j \leq p$ with $x_{j} \in T_{1}^{(s+1,k,m)}(u_{t})$ as well, then Duplicator finds the $1 \leq t' \leq m$ such that $y_{j} \in T_{2}^{(s+1,k,m)}(v_{t'})$ and sets $y_{p+1} = \varphi^{(s+1)}_{t,t'}(x_{p+1})$.
\item If $x_{p+1} \in T_{1}^{(s+1,k,m)}(u_{t})$ for some $1 \leq t \leq m+1$ such that $x_{j} \notin T_{1}^{(s+1,k,m)}(u_{t})$ for all $1 \leq j \leq p$, then Duplicator finds $1 \leq t' \leq m$ such that $v_{t'}$ has been free up to round $p$. She can always find such a $v_{t'}$ since $ m \geq (s+1)k > k > p$. Then she sets $y_{p+1} = \varphi^{(s+1)}_{t,t'}(x_{p+1})$.
\end{enumerate}

\begin{assume}\label{assume 2}
Since $m \geq (s+1)k > k$, hence there will be at least one $1 \leq i_{0} \leq m+1$ such that $u_{i_{0}}$ is free up to round $k$. Since for all $1 \leq t \leq m+1$, $T_{1}^{(s+1,k,m)}(u_{t})$ is a copy of $T_{2}^{(s,k,m)}$, hence we can assume without loss of generality that $i_{0} = m+1$. 
\end{assume}
We make Assumption~\ref{assume 2} throughout the rest of Subsection~\ref{start on T_1}.

Now we come to the second part of the game, where there are $s k$ many rounds, and Spoiler plays the first $k$ rounds of this part on $T_{2}^{(s+1,k,m)}$. Note that $T_{1}^{(s+1,k,m)}(u_{m+1})$ is a copy of $T_{2}^{(s,k,m)}$ whereas $T_{2}^{(s+1,k,m)}(v_{m+1})$ is a copy of $T_{1}^{(s,k,m)}$. 

\begin{conc}\label{conc 2}
By induction hypothesis, Duplicator wins $\ehr\left[T_{2}^{(s+1,k,m)}(v_{m+1}), T_{1}^{(s+1,k,m)}(u_{m+1}), s; k\right]$ when Spoiler starts playing on $T_{2}^{(s+1,k,m)}(v_{m+1})$.
\end{conc}

We can again split up the two trees as follows:
\begin{equation}
T_{1}^{(s+1,k,m)} = \{R_{1}\} \cup S_{1} \cup T_{1}^{(s+1,k,m)}(u_{m+1}), \text{ where } S_{1} = \bigcup_{t=1}^{m} T_{1}^{(s+1,k,m)}(u_{t}),
\end{equation}
and
\begin{equation}
T_{2}^{(s+1,k,m)} = \{R_{2}\} \cup S_{2} \cup T_{2}^{(s+1,k,m)}(v_{m+1}), \text{ where } S_{2} = \bigcup_{t'=1}^{m} T_{2}^{(s+1,k,m)}(v_{t'}).
\end{equation}
In the second part of the game, for every $k \leq p \leq (s+1)k$, Duplicator maintains the following conditions on the configuration $\left\{(x_{i}, y_{i}): 0 \leq i \leq p\right\}$:
\begin{enumerate}[label=(B'\arabic{*})]
\item \label{b'_1} For $0 \leq i \leq p$, we have $x_{i} = R_{1} \Leftrightarrow y_{i} = R_{2}$.
\item \label{b'_2} For $0 \leq i \leq p$, we have $x_{i} \in S_{1} \Leftrightarrow y_{i} \in S_{2}$. If $x_{i} \in T_{1}^{(s+1,k,m)}(u_{t})$ for some $1 \leq t \leq m$ and $y_{i} \in T_{2}^{(s+1,k,m)}(v_{t'})$ for some $1 \leq t' \leq m$, then $y_{i} = \varphi^{(s+1)}_{t,t'}(x_{i})$.
\item \label{b'_3} For $1 \leq i \neq j \leq p$, we have $x_{i}, x_{j} \in T_{1}^{(s+1,k,m)}(u_{t})$ for some $1 \leq t \leq m$ if and only if there exists some $1 \leq t' \leq m$ with $y_{i}, y_{j} \in T_{2}^{(s+1,k,m)}(v_{t'})$.
\item \label{b'_4} For $0 \leq j \leq p$, we have $x_{j} \in T_{1}^{(s+1,k,m)}(u_{m+1}) \Leftrightarrow y_{j} \in T_{2}^{(s+1,k,m)}(v_{m+1})$. Suppose $(i-1) k + 1 \leq p \leq ik$, where $2 \leq i \leq s+1$. Suppose the number of pairs of nodes selected from $T_{2}^{(s+1,k,m)}(v_{m+1}) \times T_{1}^{(s+1,k,m)}(u_{m+1})$ between rounds $k+1$ and $2k$ is $j_{1}$; between rounds $2k+1$ and $3k$ is $j_{2}$; $\ldots$ between rounds $(i-2)k +1$ and $(i-1)k$ is $j_{i-2}$; between rounds $(i-1)k+1$ and $p$ is $j'$. Then Duplicator wins $\ehrw\left[T_{2}^{(s+1,k,m)}(v_{m+1}), T_{1}^{(s+1,k,m)}(u_{m+1}); j_{1}, j_{2}, \ldots j_{i-2}, j'+ik-p, \underbrace{k, \ldots, k}_\text{$(s+1-i)$ many}\right]$, where the first $j_{1} + j_{2} + \ldots j_{i-2} + j'$ rounds have been played out, with Spoiler starting on $T_{2}^{(s+1,k,m)}(u_{m+1})$.
\end{enumerate}

Again, it is straightforward to see that Duplicator is able to maintain all the conditions stated above. She is able to maintain the Condition~\ref{b'_4} because of Conclusion \ref{conc 2} and Lemma~\ref{EHR_alt_implies_EHR_weak}. This brings us to the end of the inductive proof. \qed

\section{Acknowledgements}
The author expresses her sincere gratitude to her doctoral advisor Joel Spencer for suggesting this beautiful combinatorial problem to her. She also extends her thanks to Maksim Zhukovskii for carefully perusing through the proofs in this paper for correctness of content, and for suggesting further scopes of research in this area. This research has been supported partially by the grant NSF DMS-1444084.


\begin{thebibliography}{99}



\bibitem{01}
J.\ Spencer, The Strange Logic of Random Graphs, Springer Publishing Company, Inc. 2010; Series : Algorithms and Combinatorics, Vol. 22; ISBN:3642074995 9783642074998.


\bibitem{02}
N.\ Immerman, Descriptive complexity, Springer Science \& Business Media, 2012; ISBN-13: 978-0387986005.

\bibitem{03}
D.\ Marker, Model theory: an introduction, Springer Science \& Business Media, 2006; ISBN: 978-1-4419-3157-3.

\bibitem{04}
G.\ Erich, K.\ G.\ Phokion, L.\ Libkin, M.\ Marx, J.\ Spencer, M.\ Y.\ Vardi, Y.\ Venema and S.\ Weinstein, Finite Model Theory and its applications, Springer Science \& Business Media, 2007; eBook ISBN: 978-3-540-68804-4.

\bibitem{05}
T.\ Place and M.\ Zeitoun, The tale of the quantifier alternation hierarchy of first-order logic over words, ACM SIGLOG News, Volume 2, Issue 3, Pages 4--17, 2015, ACM.

\bibitem{06}
M.\ Kufleitner and A.\ Lauser, Quantifier alternation in two-variable first-order logic with successor is decidable, arXiv preprint arXiv:1212.6500, 2012.

\bibitem{07}
T.\ Place and M.\ Zeitoun, Going higher in the first-order quantifier alternation hierarchy on words, International Colloquium on Automata, Languages, and Programming, Pages 342--353, 2014, Springer.

\bibitem{08}
M.\ Boja{\'n}czyk and L.\ Segoufin, Tree languages defined in first-order logic with one quantifier alternation, International Colloquium on Automata, Languages, and Programming, Pages 233--245, 2008, Springer.

\bibitem{09}
H.\ J.\ Keisler and W.\ B.\ Lotfallah, First order quantifiers in monadic second order logic, The Journal of Symbolic Logic, Volume 69, Issue 1, Pages 118--136, 2004, Cambridge University Press. 

\bibitem{10}
E.\ Pezzoli, Computational Complexity of Ehrenfeucht-Fraïssé Games on Finite Structures, International Workshop on Computer Science Logic, Pages 159--170, 1998, Springer.

\bibitem{11}
E.\ Pezzoli, On the computational complexity of type-two functionals and logical games on finite structures, Ph.\ D.\ thesis, Stanford University, 1998.

\bibitem{12}
A.\ D.\ Matushkin, and M.\ E.\ Zhukovskii, ME First order sentences about random graphs: small number of alternations, Discrete Applied Mathematics, Volume 236, Pages 329--346, 2018, Elsevier.

\bibitem{13}
O.\ Matz, N.\ Schweikardt and W.\ Thomas, The monadic quantifier alternation hierarchy over grids and graphs, Information and Computation, Volume 179, Issue 2, Pages 356--383, 2002, Academic Press. 

\bibitem{14}
O.\ Matz, Dot-depth, monadic quantifier alternation, and first-order closure over grids and pictures, Theoretical Computer Science, Volume 270, Issue 1-2, Pages 1--70, 2002, Elsevier. 

\bibitem{15}
A.\ Chandra and D.\ Harel, Structure and complexity of relational queries, Journal of Computer and System Sciences, Volume 25, Issue 1, Pages 99--128, 1982, Elsevier.

\bibitem{16}
W.\ Thomas, Classifying regular events in symbolic logic, Journal of Computer and System Sciences, Volume 25, Issue 3, Pages 360--376, 1982, Elsevier. 

\bibitem{17}
O.\ Matz and N.\ Schweikardt, Expressive power of monadic logics on words, trees, pictures, and graphs, Logic and Automata, Volume 2, Pages 531--552, 2008.


\end{thebibliography}
\end{document}